\documentclass[notitlepage,11pt]{article}
\usepackage{amssymb,amsfonts,amsmath,geometry,esint,comment,upgreek,cases,graphicx} 
\usepackage{scalerel,mathtools,stmaryrd,soul,array}

\catcode`\@=11
\numberwithin{equation}{section}

\catcode`\@=12

\allowdisplaybreaks

\newtheorem{Theorem}{Theorem}[section]
\newtheorem{Lemma}[Theorem]{Lemma}

\newtheorem{Corollary}[Theorem]{Corollary}
\newtheorem{Remark}[Theorem]{Remark}

\def\QED{\hfill {$\square$}\goodbreak \medskip}
\def\proof{\noindent{\textbf{Proof. }}}

\usepackage{todonotes}
\usepackage[T3,T1]{fontenc}
\DeclareSymbolFont{tipa}{T3}{cmr}{m}{n}
\DeclareMathAccent{\invbreve}{\mathalpha}{tipa}{16}

\def\R{\mathbb R}

\def\f{\varphi}

\def\intl{\int\limits}

\def\iintl{\iint\limits}
\def\irn{\int\limits_{\R^n}}
\def\eps{\varepsilon}

\newcommand{\cF}{{\mathcal F}}

\newcommand{\tHs}{H^s_0}
\newcommand{\Hdiesis}{H^s_\#}

\def\Dsn{\left(-\Delta_{n}\right)^{s}\!} 
\def\Dsnhalf{\left(-\Delta_{n}\right)^{\frac{s}{2}}\!}

\def\Dsd{\left(-\Delta_{d}\right)^{s}\!} 
\def\Dshalfd{\left(-\Delta_{d}\right)^{\frac{s}{2}}\!}

\def\DsUno{\left(-\Delta_{k}\right)^{s}\!} 
\def\DshalfUno{\left(-\Delta_{k}\right)^{\frac{s}{2}}\!}

\def\DsBraides{\left(-\Delta_{1+k}\right)^{s}\!} 

\def\Dsnp{\left(-\Delta_{n+k}\right)^{s}\!} 
\def\Dsnphalf{\left(-\Delta_{n+k}\right)^{\frac{s}{2}}\!}

\def\E{{\mathcal E}}
\def\tE{\widetilde {\mathcal E}}

\def\tlambda{\widetilde{\lambda^s_m}}

\begin{document}

\title 
{On the asymptotic behaviour of the restricted Dirichlet  Laplacian of real order $s>0$ on stretching tubes} 

\author{Roberta Musina and Giulio Romani
\footnote{DMIF, Universit\`a di Udine,
via delle Scienze 206, 33100 Udine, Italy. Email: {roberta.musina@uniud.it}, {giulio.romani@uniud.it}.}}
\date{}
\maketitle

\begin{abstract}
We study spectral and variational properties of the (possibly) fractional Dirichlet Laplacian $\Dsnp$, $s>0$, on bounded subsets of $\R^n\times\R^k$ whose extent in one or more directions becomes much larger than in the remaining ones.
We first investigate the asymptotic behaviour of the first eigenvalue on stretching (or thinning) tubes, covering in particular the case of integers $s\ge 2$, for which $\Dsnp$ reduces to a polyharmonic operator. Then we  
	compute the $G$-limit of the rescaled operators, and the $\Gamma$-limit of the corresponding rescaled quadratic forms.

\vskip0.5cm

\noindent
\textbf{Keywords:} {Fractional Laplacian, polyharmonic operators,  
dimension reduction, $G$-convergence, $\Gamma$-convergence.}

\medskip\noindent
\textbf{2010 Mathematics Subject Classification:} 35B40; 35R11; 49J45.
\end{abstract}

\section{Introduction}
The analysis of differential equations, variational problems, and spectral problems posed on domains whose extent 
in one or more directions becomes much larger than in the remaining ones has attracted considerable attention 
because of its relevance to both theory and applications. 
A typical feature of these problems is the occurrence of a {\it dimension reduction} phenomenon: in the asymptotic regime, 
the limiting behaviour is governed solely by the cross-section of the domain.

Situations of this type naturally arise, for instance, in the study of elastic plates and other thin elastic structures \cite{ABP, Ciarlet, Toupin}, relativity \cite{CMS}, as well as in quantum mechanics \cite{EK_qw}; see also the survey \cite{G}. Remarkable results for second-order equations driven by general uniformly elliptic operators were obtained in \cite{Ch3, ChRou, Y}; see also the monographs \cite{Ch,Ch1} by M. M. Chipot and the references therein. For related results involving nonlocal or higher-order operators, we refer the reader to \cite{AFM, BN, BS, BPS, BPS1, ChCRS, ChowR, ChowR2} and to \cite{BSAN,O}, respectively, and the references therein.

\medskip

We now describe our setting in detail. We consider domains in $\R^{n+k}\equiv  \R^n\times\R^k$
of the form
$$
	\omega^n\times\ell B^k\,, \quad\ \ell\to\infty\,,
$$
where $\omega^n$ is a bounded open set in $\R^n$
, and
$B^k\subset \R^k$ is a bounded and open neighbourhood of the origin.
From the viewpoint of applications, the case of greatest interest is $k=1$,  
$\omega^n\times \ell B^k=\omega^n\times(-\ell,\ell)$.

\medskip

Given any $s>0$, the $s$-power of the Dirichlet Laplacian in $\R^{n+k}$ is formally defined via
$$
\mathcal F_{n+k}[\Dsnp u](\xi,\tau)=\big(|\xi|^2+|\tau|^2\big)^{\!s}\mathcal F_{n+k}[u](\xi,\tau)\,,
$$
where $\mathcal F_{n+k}$ is the unitary Fourier transform of $L^2(\R^{n+k})$ into itself. 

If $s\ge 1$ is an integer, then $\Dsnp$ coincides with the pointwisely defined (poly)harmonic operator, see for instance \cite{GGS} and references therein; if $s\in(0,1)$
the following representation formula holds:
$$
\Dsnp u(z)= C_{n+k,s}~{\rm P.V.}\hskip-0.1cm\int\limits_{\R^{n+k}}
\frac{~u(z)-u(z')}{~|z-z'|^{n+k+2s}}\,dz',
$$
where $C_{n+k,s}=\frac{s2^{2s}}{\Gamma(1-s)}
~\!\frac{\Gamma(\frac{n+k}{2}+s)}{\pi^{\frac{n+k}{2}}}$ and ${\rm P.V.}$ means {\it principal value}.

\medskip

We study the asymptotic behaviour of the so-called {\it restricted} Dirichlet fractional Laplacian 
$$
\Dsnp:~ \tHs(\omega^n\times\ell B^k)\to \tHs(\omega^n\times\ell B^k)',
$$
and of the associated quadratic form
$$
\iintl_{\R^{n+k}}|\Dsnphalf u|^2~\!dxdt=\iintl_{\R^{n+k}}\big(|\xi|^2+|\tau|^2\big)^{\!s}|\cF_{n+k}[u]|^2\,d\xi d\tau\,,\quad u\in \tHs(\omega^n\times\ell B^k)
$$
as $\ell\to\infty$, 
see Section \ref{S:preliminary} for notation and details.

\medskip

\noindent
A dimension reduction phenomenon is described through three complementary perspectives:

$a)$ asymptotic behaviour of the eigenvalues;

$b)$ $G$-convergence of the rescaled operators;

$c)$ $\Gamma$-convergence of the rescaled quadratic forms. 

\medskip

We begin with question $a)$. In the local case  $s=1$, the eigenvalues of the conventional Laplacian $-\Delta_{n+k}$ on $H^1_0(\omega^n\times\ell B^k)$ are readily seen to be
$\lambda^1_\nu(\omega^n)+\ell^{-2}{\lambda^1_j(B^k)}$ for $\nu, j\ge 1$  (see the notation in the next section).
In particular 
\begin{equation}
\label{eq:s=1}
\lambda^1_1(\omega^n\times\ell B^k) = \lambda^1_1(\omega^n)+\frac{\lambda^1_1(B^k)}{\ell^2}\,.
\end{equation}
Notably, this is the only case in which separation of variables is possible.
By contrast, Theorem 1.1 in  \cite{AFM} gives
$$
{\lambda}_1^s(\omega^n)< {\lambda}_1^s(\omega^n\times\ell B^k) < {\lambda}_1^s(\omega^n)+\dfrac{{\lambda}_1^s(B^k)}{\ell^{2s}}\quad \text{if $s\in(0,1)$}
$$
(the convexity assumption on $B^k$  therein is, in fact, not needed for this result).
 
We first extend  \cite[Theorem 1.1]{AFM} to cover the case of higher order eigenvalues. 
\begin{Theorem}
\label{T:eigenvalue1}
Let $s\in(0,1)$. For any integer $m\ge 1$ it holds that
\begin{equation}
\label{eq:weak2}
{\lambda}_1^s(\omega^n) <{\lambda}_m^s(\omega^n\times\ell B^k) < {\lambda}_1^s(\omega^n)+\dfrac{{\lambda}_m^s(B^k)}{\ell^{2s}}\,,
\end{equation}
provided that $\ell>0$ is large enough.
	\end{Theorem}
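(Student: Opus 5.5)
Both inequalities will come from the min--max characterization of $\lambda^s_m(\omega^n\times\ell B^k)$ on $\tHs(\omega^n\times\ell B^k)$, with the quadratic form $\iint(|\xi|^2+|\tau|^2)^s|\cF_{n+k}[u]|^2$ written in the Fourier variables.

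\textbf{Lower bound.} Since $\lambda^s_m\ge\lambda^s_1$, it suffices to prove $\lambda_1^s(\omega^n)<\lambda_1^s(\omega^n\times\ell B^k)$, which is \cite[Theorem 1.1]{AFM}; its convexity assumption is not needed. A self-contained argument runs as follows. For $u\in\tHs(\omega^n\times\ell B^k)$ and a.e. $t$, the slice $u(\cdot,t)$ lies in $\tHs(\omega^n)$. Take the partial Fourier transform in $t$ only, $v(\cdot,\tau)=\cF_k[u](\cdot,\tau)$. Each $v(\cdot,\tau)$ is a complex combination of functions in $\tHs(\omega^n)$ (an integral of slices), so it belongs to $\tHs(\omega^n)$. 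Since $s\in(0,1)$, we have $(|\xi|^2+|\tau|^2)^s\ge|\xi|^{2s}$, and hence
$$\iint(|\xi|^2+|\tau|^2)^s|\cF_{n+k}u|^2\ge\int_{\R^k}\Big(\int|\xi|^{2s}|\cF_n v(\cdot,\tau)|^2d\xi\Big)d\tau\ge\lambda^s_1(\omega^n)\|u\|_2^2.$$
For strictness, equality would force $\cF_{n+k}u=0$ a.e. wherever $\tau\ne0$, so $\cF_{n+k}u$ would be supported in $\{\tau=0\}$. Then $u\equiv0$ in $L^2$, a contradiction; this holds at least for minimizers, which exist by compactness. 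The lower bound holds for every $\ell>0$.

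\textbf{Upper bound.} Let $\psi_1>0$ be the first eigenfunction on $\omega^n$, and let $\phi_1,\dots,\phi_m$ be the first $m$ orthonormal eigenfunctions on $B^k$. Set $\phi_j^\ell(t)=\ell^{-k/2}\phi_j(t/\ell)$ and consider the $m$-dimensional space $V_\ell=\{\psi_1\otimes\phi:\phi\in\mathrm{span}\,\phi_j^\ell\}\subset\tHs(\omega^n\times\ell B^k)$, which is orthonormal in $L^2$. The key tool is the strict subadditivity of $r\mapsto r^s$ for $s\in(0,1)$: for $a,b>0$,
$$(a+b)^s<a^s+b^s.$$
Applying it with $a=|\xi|^2$, $b=|\tau|^2$, for $u=\psi_1\otimes\phi$ we get
$$Q(u)<\|\phi\|^2\int|\xi|^{2s}|\hat\psi_1|^2+\|\psi_1\|^2\int|\tau|^{2s}|\hat\phi|^2=\lambda^s_1(\omega^n)\|\phi\|^2+\ell^{-2s}\int|\tau|^{2s}|\cF_k\phi_1\text{-comb}|^2.$$
The inequality is strict since the set where $\xi\ne0\ne\tau$ has positive measure in the support of the transform. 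By scaling, the last term is at most $\ell^{-2s}\lambda^s_m(B^k)\|\phi\|^2$. Maximizing over the compact unit sphere of $V_\ell$ preserves the strict inequality, since the maximum is attained and the inequality is strict at every point. Hence $\lambda^s_m(\omega^n\times\ell B^k)<\lambda_1^s(\omega^n)+\ell^{-2s}\lambda_m^s(B^k)$ for every $\ell>0$.

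\textbf{Main obstacle.} The delicate point is the strictness in both inequalities, namely ruling out equality cases rigorously, and checking that the slices and partial Fourier transforms stay in $\tHs(\omega^n)$. The phrase ``$\ell$ large enough'' in the statement suggests that the authors' argument is asymptotic: for instance, they might use an estimate of the form $\lambda^s_m(\omega\times\ell B)\to\lambda^s_1(\omega)$ together with a quantitative gap. In my approach both inequalities appear to hold for every $\ell$. If an issue arises, the fallback for the lower bound is a contradiction argument: take minimizers as $\ell\to\infty$ and show that the strict gap persists.
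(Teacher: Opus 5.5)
Your argument is correct. It also proves slightly more than stated: both inequalities hold for every $\ell>0$.

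The mechanism is the paper's. For $s\in(0,1)$ the pointwise bound $(|\xi|^2+|\tau|^2)^s<|\xi|^{2s}+|\tau|^{2s}$ is exactly the strict comparison $\iint|\Dsnphalf u|^2<\tE^s_\ell(u)$ in (\ref{eq:second_bis}). Your lower bound is (\ref{eq:first}) of Lemma \ref{L:poincare}.

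The route to the upper bound differs. The paper applies Courant--Fischer to the separable comparison operator $\Dsn\oplus\DsUno$ and gets $\lambda^s_m(\omega^n\times\ell B^k)<\tlambda(\omega^n\times\ell B^k)$. It then identifies $\tlambda$ from the full spectrum in Lemma \ref{L:tilde}. That identification is the only place where simplicity of $\lambda^s_1(\omega^n)$ and ``$\ell$ large'' enter. You instead test directly on the $m$-dimensional space $\psi_1\otimes\mathrm{span}\{\phi^\ell_1,\dots,\phi^\ell_m\}$, which bypasses Lemma \ref{L:tilde}.

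This agrees with the paper's route. There only $\tlambda\le\lambda^s_1(\omega^n)+\ell^{-2s}\lambda^s_m(B^k)$ is actually needed, and that holds for all $\ell$: the $m$ values $\lambda^s_1(\omega^n)+\ell^{-2s}\lambda^s_j(B^k)$, $j\le m$, all lie below that level.

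What the comparison-operator framework buys is reuse in the reversed case $s>1$ (Subsection \ref{SSS:lambda_m}). There $\tE^s_\ell$ bounds the form from below, so the exact value of $\tlambda$, and hence $\ell$ large, is genuinely needed; a test space only gives upper bounds.

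Two small points. First, the estimate $\int|\tau|^{2s}|\cF_k[\phi]|^2d\tau\le\ell^{-2s}\lambda^s_m(B^k)\|\phi\|_2^2$ on the span uses that the $\phi_j$ are orthogonal also for the energy form, as in the proof of Lemma \ref{L:tilde}; you should say so. Second, in the lower bound the detour through $\cF_k[u](\cdot,\tau)\in\tHs(\omega^n)$ is unnecessary. Plancherel in $\tau$ turns $\iint|\xi|^{2s}|\cF_{n+k}[u]|^2$ into $\int dt\int|\xi|^{2s}|\cF_n[u(\cdot,t)]|^2d\xi$, and the Poincar\'e inequality then applies slice by slice. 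Also, $(|\xi|^2+|\tau|^2)^s\ge|\xi|^{2s}$ holds for every $s>0$, not only $s\in(0,1)$.
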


Theorem \ref{T:eigenvalue1} improves the asymptotic estimate obtained in \cite{ChowR2}, where $\omega^n\times\ell B^k=\omega^n\times (-\ell,\ell)^k$ is required, and an
involved decomposition of the domains is used.

\medskip
As far as we know, little is known about the asymptotic behaviour of 
$\lambda_m^s(\omega^n\times\ell B^k)$ for $s>1$ and $m\ge 1$, including the case where $s\ge 2$ is an integer. For $s=2$ and on the domain 
$(-1,1)\times (\ell,\ell)\subset\R^2$, sharp asymptotic estimates on the first eigenvalue were obtained by Owen in \cite{O}, see also the Appendix.

The next result provides estimates for the first eigenvalue $\lambda_1^s(\omega^n\times\ell B^k)$ as $\ell\to\infty$ in case $s>1$.
Further refinements are provided in the Appendix.

\begin{Theorem}\label{T:eigenvalue2}
	Let $s>1$. For any $\ell>0$ it holds that
	\begin{equation}\label{eq:weak}
		\lambda^s_1(\omega^n)+\frac{\lambda^s_1(B^k)}{\ell^{2s}}< \lambda^s_1(\omega^n\times\ell B^k) < \lambda^s_1(\omega^n)+ sc_s\frac{\lambda^s_1(B^k)}{\ell^{2s}}+
		\gamma_s c_s \frac{1}{\ell^2}\,,
	\end{equation}
	where $c_s=\max\{1,2^{s-2}\}$, and where the constant $\gamma_s>0$ 
	depends only on $s$, $\omega^n$, $B^k$, see (\ref{eq:gamma0}), and satisfies the estimate
	\begin{equation}\label{eq:gamma}
		\gamma_s 
		< \lambda^s_1(\omega^n)^{\frac{s-1}s}\lambda^s_1(B^k)^\frac1s.
	\end{equation}
\end{Theorem}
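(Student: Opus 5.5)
Both bounds come from the Fourier form of the quadratic form together with elementary inequalities for $(a+b)^s$ with $a=|\xi|^2$, $b=|\tau|^2$ and $s>1$.

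\emph{Lower bound.} For $s>1$ and $a,b\ge0$ we have $(a+b)^s\ge a^s+b^s$, with strict inequality whenever $ab>0$. Take $u\in \tHs(\omega^n\times\ell B^k)$ with $\|u\|_2=1$. Then
\[
\iintl_{\R^{n+k}}|\Dsnphalf u|^2\,dxdt\ \ge\ \iintl_{\R^{n+k}}|\Dsnhalf u|^2\,dxdt+\iintl_{\R^{n+k}}|\DshalfUno u|^2\,dxdt,
\]
where the operators on the right act in $x$ and in $t$ separately. For a.e.\ fixed $t$, $u(\cdot,t)\in \tHs(\omega^n)$, so the first term is at least $\lambda_1^s(\omega^n)\int|u|^2$ by Fubini. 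For a.e.\ fixed $x$, $u(x,\cdot)\in\tHs(\ell B^k)$, so the second term is at least $\lambda_1^s(\ell B^k)=\ell^{-2s}\lambda_1^s(B^k)$ by scaling. Both slices need to be justified, which can be done by density of $C^\infty_c$. Strictness is immediate: equality would force $\cF_{n+k}[u]$ to vanish off the null set $\{|\xi||\tau|=0\}$, hence $u=0$. Taking $u$ to be a first eigenfunction gives the strict lower inequality for every $\ell$.

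\emph{Upper bound.} I test with the product $u=\f(x)\psi(t/\ell)$, where $\f,\psi$ are $L^2$-normalized first eigenfunctions of $\omega^n$ and $B^k$. With $\hat\psi_\ell$ the transform of $\psi(\cdot/\ell)$, the Rayleigh quotient is
\[
\iint (a+b)^s|\hat\f(\xi)|^2|\hat\psi_\ell(\tau)|^2\,d\xi d\tau .
\]
I need the elementary inequality
\[
(a+b)^s\le a^s+s c_s\bigl(a^{s-1}b+b^s\bigr),\qquad c_s=\max\{1,2^{s-2}\}.
\]
It follows from the mean value formula $(a+b)^s-a^s=s\,b\,(a+\theta b)^{s-1}$ together with $(a+\theta b)^{s-1}\le c_s(a^{s-1}+b^{s-1})$ (convexity/concavity of $r\mapsto r^{s-1}$ according as $s\ge2$ or $s<2$). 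Integrating term by term:
\begin{itemize}
\item the $a^s$ term gives $\lambda_1^s(\omega^n)$;
\item the $b^s$ term gives $\ell^{-2s}\lambda_1^s(B^k)$;
\item the mixed term gives
\[
\frac1{\ell^2}\int|\xi|^{2(s-1)}|\hat\f|^2\,d\xi\int|\tau|^2|\hat\psi|^2\,d\tau=:\frac{\gamma_s}{\ell^2}.
\]
\end{itemize}
This is presumably the definition (\ref{eq:gamma0}), possibly after optimizing over choices of $\f,\psi$. Strictness of the upper bound follows because a product function is not an eigenfunction of $\Dsnp$ for $s\ne1$, or alternatively because the inequality for $(a+b)^s$ is strict on a set of positive measure.

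\emph{Estimate (\ref{eq:gamma}).} I apply H\"older to each factor, with exponents $s/(s-1)$ and $s$ respectively:
\[
\int|\xi|^{2(s-1)}|\hat\f|^2\le\Bigl(\int|\xi|^{2s}|\hat\f|^2\Bigr)^{\frac{s-1}{s}}=\lambda_1^s(\omega^n)^{\frac{s-1}s},\qquad
\int|\tau|^2|\hat\psi|^2\le\lambda_1^s(B^k)^{\frac1s}.
\]
Each uses $\|\hat\f\|_2=\|\hat\psi\|_2=1$. Equality in H\"older would require $|\xi|$ to be constant on the support of $\hat\f$, which is impossible for compactly supported $\f$ because $\hat\f$ is analytic. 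This gives strictness.

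The main obstacle I expect is pinning down the sharp constant in the elementary inequality for $(a+b)^s$ uniformly in $s>1$, handling the regimes $s<2$ and $s\ge2$ separately. I also need to check that $\gamma_s$ matches the paper's (\ref{eq:gamma0}).
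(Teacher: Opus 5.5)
Your proposal follows the paper's proof in all essentials. The lower bound uses superadditivity $(a+b)^s>a^s+b^s$ with slice-wise Poincaré inequalities, which the paper packages as \eqref{eq:second_bis}. The upper bound uses the product test function $\f_1\psi_1^\ell$ together with the mean-value inequality $(a+b)^s<a^s+sc_s(a^{s-1}b+b^s)$. Estimate \eqref{eq:gamma} comes from H\"older, and your analyticity argument for strictness in H\"older supplies a detail the paper leaves implicit.

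One bookkeeping point matters. In your term-by-term list you dropped the prefactor $sc_s$. Carried through, your inequality gives $sc_s\gamma_s\ell^{-2}$, which is exactly what the paper's own proof produces, not the $c_s\gamma_s\ell^{-2}$ printed in the statement.

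The printed form is a misprint, and no choice or optimization of $\f,\psi$ will reach it. The printed bound actually fails: take $s=2$ and $\omega^n=B^k=(-1,1)$. Then $\gamma_2=\|\f_1'\|_2^4\approx 9.5$, while Theorem~\ref{T:eigenvalue3} forces the $\ell^{-2}$-coefficient to be at least $2\pi^4/16\approx 12.2$, so the printed inequality is false for large $\ell$. Take $\gamma_s$ simply as the product \eqref{eq:gamma0} for the fixed normalized first eigenfunctions.
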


\medskip 

In the Appendix, we apply Theorems \ref{T:eigenvalue1} and \ref{T:eigenvalue2} to describe the asymptotic behaviour
of the eigenvalues of the Dirichlet Laplacian 
$\DsBraides$ on the {\it thin film}
$$
A_\eps=(-\eps,\eps)\times A\subset \R\times\R^k
$$
as $\eps\to 0^+$, see Corollary \ref{T:Braides1}. This setting is of considerable interest because of its applications. In particular, when $s=2$, it arises in the theory of rigidly clamped Kirchhoff plates; see e.g. \cite{BSAN}. Nonlocal operators on thinning domains  
have attracted significant attention in recent years.
We mention, in particular, the recent papers \cite{BS, BPS, BPS1} and references therein, where dimension reduction phenomena for the {\it regional} fractional Laplacian  
$$
(-\Delta_{A_\eps})^{\!s} u(z)= 
C_{1+k,s}~\! {\rm P.V.}\hskip-0.1cm \int\limits_{A_\eps}\frac{u(z)-u(z')}{|z-z'|^{1+k+2s}}~\!dz'~\!,\quad z\in A_\eps\,~,
\quad s\in(0,1)\,,
$$
have been investigated.

\medskip

We now turn to points $b)$ and $c)$ above. We introduce the self-adjoint operator
\begin{equation}
\label{eq:ellesse}
	\mathcal L^s_\ell= R_\ell^*\circ \Dsnp\circ R_\ell\,,\qquad \mathcal L^s_\ell: \tHs(\omega^n\times B^k)\to\tHs(\omega^n\times B^k)',
\end{equation}
where $R_\ell:\tHs(\omega^n\times B^k)\to \tHs(\omega^n\times\ell B^k)$ is the rescaling operator
\begin{equation}
\label{eq:rescaling}
	R_\ell v(x,t)= \ell^{-\frac{k}{2}}v\big(x,\tfrac{t}{\ell}\big)
\end{equation}
and $R_\ell^*:\tHs(\omega^n\times\ell B^k)'\to \tHs(\omega^n\times B^k)'$ is its adjoint. 
More explicitly, in Section \ref{S:preliminary} we note that
\begin{equation}
\label{eq:Lsell0}
	\langle\mathcal L^s_\ell v,\f\rangle
	= \langle\Dsnp(R_\ell v),R_\ell\f\rangle=
	\iintl_{\R^{n+k}}\big(|\xi|^2+\ell^{-2}|\tau|^2\big)^{\!s}\cF_{n+k}[v]\overline{\cF_{n+k}[\f]}~\!d\xi d\tau
\end{equation}
for $ v,\f\in \tHs(\omega^n\times B^k)$. The corresponding quadratic form is given by
\begin{equation}
\label{eq:Q_ell}
	Q_\ell(v)=\iintl_{\R^{n+k}}|\Dsnphalf (R_\ell v)|^2~\!dxdt=\iintl_{\R^{n+k}}\big(|\xi|^2+\ell^{-2}|\tau|^2\big)^{\!s}|\cF_{n+k}[v]|^2~\!d\xi d\tau\,.
\end{equation}
Our results invoke in a natural way the Hilbert space 
$\Hdiesis(\omega^n\times B^k)$,
which is obtained by completing $\mathcal C^\infty_c(\omega^n\times B^k)$ in $L^2(\R^{n+k})$ with respect to the norm 
\begin{equation}
\label{eq:norm_diesis}
	\|v\|^2_\#=\int\limits_{B^k}dt\irn |\Dsnhalf v|^2~\!dx= \int\limits_{B^k}dt\int\limits_{\R^{n}}|\xi|^{2s}|\mathcal F_n[u(\cdot,t)]|^2~\!d\xi~\!.
\end{equation}
The linear operator
$\Dsn:\Hdiesis(\omega^n\times B^k)\to \Hdiesis(\omega^n\times B^k)'$ is defined via
\begin{equation}
\label{eq:Dsn_diesis}
	\langle \Dsn v,\f\rangle= \intl_{B^k}dt\irn(\Dsnhalf v)(\Dsnhalf\f)~\!dx=
	\int\limits_{B^k}dt \intl_{\R^{n}}|\xi|^{2s}\cF_{n}[v]\overline{\cF_{n}[\f]}~\!d\xi ~\!. 
\end{equation}

The Hilbert space $\Hdiesis(\omega^n\times B^k)$, to which Section \ref{S:space_diesis} is devoted, 
and the following result are central to the remainder of the paper.

\begin{Theorem}\label{Pb_lin_2}
Let $\ell\to \infty$,  $(g_{\ell})_\ell\subset L^2(\omega^n\times B^k)$ be given sequences. Assume that $g_{\ell}\to g$ in $L^2(\omega^n\times B^k)$ for some $g\in L^2(\omega^n\times B^k)$.
For each $\ell$, let  $v_\ell\in \tHs(\omega^n\times B^k)$ be the solution to 
		\begin{equation}\label{eq:Dirichlet_ell}
			\mathcal L^s_\ell v_\ell= g_{\ell}\qquad \text{in }\,\tHs(\omega^n\times B^k)'\,.
		\end{equation}
		Then $v_\ell\to v$ in $\Hdiesis(\omega^n\times B^k)$ as $\ell\to \infty$, where $v$ is the unique solution to 
		\begin{equation}\label{eq:Dirichlet_omega}
			\Dsn v= g\qquad \text{in }\,\Hdiesis(\omega^n\times B^k)'\,.
		\end{equation}
	\end{Theorem}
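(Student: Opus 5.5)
The plan is a standard energy/compactness argument in Fourier variables, closed by an upgrade from weak to strong convergence via convergence of energies.

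\textbf{Step 1 (uniform bounds).} The key pointwise fact is that the symbol
$$
m_\ell(\xi,\tau)=\big(|\xi|^2+\ell^{-2}|\tau|^2\big)^{s}
$$
is nonincreasing in $\ell$ and bounded below by $|\xi|^{2s}$. Hence
$$
Q_\ell(v)\ge \|v\|_\#^2\quad\text{for all } v\in\tHs(\omega^n\times B^k).
$$
By Plancherel in $t$, $\|v\|_\#^2=\iint|\xi|^{2s}|\cF_{n+k}[v]|^2\,d\xi d\tau$. For functions supported in $\omega^n\times B^k$, the Poincaré-type inequality $\|v\|_\#^2\ge\lambda_1^s(\omega^n)\|v\|^2_{L^2}$ holds; it follows by applying the fractional Poincaré inequality on $\omega^n$ slicewise for a.e. $t$. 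Testing \eqref{eq:Dirichlet_ell} with $v_\ell$ then gives
$$
\|v_\ell\|_\#^2\le Q_\ell(v_\ell)=\int g_\ell v_\ell\le \|g_\ell\|_{L^2}\,\lambda_1^s(\omega^n)^{-1/2}\|v_\ell\|_\#,
$$
so both $Q_\ell(v_\ell)$ and $\|v_\ell\|_\#$ are bounded. Note also that \eqref{eq:Dirichlet_omega} has a unique solution by Riesz, because $g\in L^2$ defines a bounded functional on $\Hdiesis$ via the same Poincaré inequality.

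\textbf{Step 2 (weak limit).} Extract a subsequence with $v_\ell\rightharpoonup v$ weakly in $\Hdiesis(\omega^n\times B^k)$, hence weakly in $L^2$. To identify $v$, fix $\f\in\mathcal C^\infty_c(\omega^n\times B^k)$ and test \eqref{eq:Dirichlet_ell} with $\f$:
$$
\iint m_\ell\,\cF[v_\ell]\overline{\cF[\f]}=\int g_\ell\f .
$$
Write $m_\ell=|\xi|^{2s}+(m_\ell-|\xi|^{2s})$. The first term converges to $\langle\Dsn v,\f\rangle$. For the remainder I use Cauchy–Schwarz with the weight $m_\ell$:
$$
\Big|\iint(m_\ell-|\xi|^{2s})\cF[v_\ell]\overline{\cF[\f]}\Big|\le Q_\ell(v_\ell)^{1/2}\Big(\iint \frac{(m_\ell-|\xi|^{2s})^2}{m_\ell}|\cF[\f]|^2\Big)^{1/2}.
$$
The last integrand is bounded by $m_\ell|\cF[\f]|^2\le m_1|\cF[\f]|^2$, which is integrable since $\f\in H^s$, and it tends to $0$ pointwise. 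Dominated convergence then shows the remainder vanishes. So $\langle\Dsn v,\f\rangle=\int g\f$, and by density (the space is defined as the completion of $\mathcal C^\infty_c$) $v$ solves \eqref{eq:Dirichlet_omega}. By uniqueness, the whole family converges weakly.

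\textbf{Step 3 (strong convergence; the main point).} Weak convergence is upgraded to norm convergence. Since $g_\ell\to g$ strongly in $L^2$ and $v_\ell\rightharpoonup v$ weakly in $L^2$,
$$
\limsup\|v_\ell\|_\#^2\le\limsup Q_\ell(v_\ell)=\lim\int g_\ell v_\ell=\int gv=\|v\|_\#^2 .
$$
Together with weak lower semicontinuity this gives $\|v_\ell\|_\#\to\|v\|_\#$. Weak convergence plus convergence of norms in a Hilbert space yields $v_\ell\to v$ in $\Hdiesis$.

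The delicate point is the remainder estimate in Step 2. We only have $\Hdiesis$ control on $v_\ell$, with no $\tau$-regularity, so the cross term must be handled through the $Q_\ell$-weighted Cauchy–Schwarz, not by direct bounds. One must also check that $\mathcal C^\infty_c$ test functions are admissible in both weak formulations, so that density applies.
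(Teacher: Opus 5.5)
Your proof is correct and has the same structure as the paper's:
\begin{itemize}
\item an a priori bound from $|\xi|^{2s}\le(|\xi|^2+\ell^{-2}|\tau|^2)^s$ together with the slicewise Poincar\'e inequality;
\item extraction of a weak limit;
\item identification of that limit by testing with $\f\in\mathcal C^\infty_c(\omega^n\times B^k)$;
\item the upgrade to strong convergence via $\limsup\|v_\ell\|_\#^2\le\lim\int g_\ell v_\ell=\int gv=\|v\|_\#^2$.
\end{itemize}
Your Step 3 is exactly the paper's argument.

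The one genuine difference is in the identification step. The paper puts the whole $\ell$-dependent symbol on the test function. It sets $\Phi_h=(|\xi|^2+\ell_h^{-2}|\tau|^2)^s\cF_{n+k}[\f]$ and dominates $|\Phi_h|$ by $(|\xi|^2+|\tau|^2)^s|\cF_{n+k}[\f]|$, which lies in $L^2$ because $(-\Delta_{n+k})^s\f\in L^2$. This gives $\Phi_h\to|\xi|^{2s}\cF_{n+k}[\f]$ strongly in $L^2$, which the paper pairs with $\cF_{n+k}[v_{\ell_h}]\rightharpoonup\cF_{n+k}[\hat v]$ weakly in $L^2$. That route uses only weak $L^2$ convergence of $v_\ell$, but it needs the test function to have $2s$ derivatives in $L^2$.

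Your splitting of the symbol plus the $m_\ell$-weighted Cauchy--Schwarz instead uses the uniform bound on $Q_\ell(v_\ell)$ and weak convergence in $\Hdiesis(\omega^n\times B^k)$. In return it only requires $m_1|\cF_{n+k}[\f]|^2\in L^1$. So it works verbatim for any $\f\in\tHs(\omega^n\times B^k)$ and is the more robust, energy-type argument; the paper's version is slightly shorter.

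Your closing remark that the cross term \emph{must} be handled through the weighted Cauchy--Schwarz is therefore not accurate. Since the test functions are smooth, the paper disposes of the cross term with weak $L^2$ convergence of $v_\ell$ alone, and neither route needs any $\tau$-regularity of $v_\ell$.
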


Theorem \ref{Pb_lin_2} has a couple of remarkable consequences. In case $s\in(0,1)$,
the next result improves \cite[Theorem 1.2]{AFM}, see  Remark \ref{R:AFM1}.

\begin{Theorem}\label{Pb_lin_new}
	Let $\ell\to \infty$,  $f_{\ell}=f_{\ell}(x,t)\in L^2(\omega^n\times \ell B^k)$ be given sequences.  
	Assume that $f_{\ell}\to f_\infty$ in $L^2-$mean  for some $f_\infty=f_\infty(x)\in L^2(\omega^n)$, that is,
	$$
	\fint\limits_{\ell B^k}dt\int\limits_{\omega^n} |f_{\ell}-f_\infty|^2~\!dx\to 0\,.
	$$
	For each $\ell$, let $u_{\ell}=u_{\ell}(x,t)\in \tHs(\omega^n\times \ell B^k)$ be the solution to
	\begin{equation}\label{eq:Dirichlet_ell_Thm}
	\Dsnp u_{\ell}= f_{\ell}\quad\text{in $\tHs(\omega^n\times \ell B^k)'$}~\!,
	\end{equation}
	and let $u_\infty=u_\infty(x)\in \tHs(\omega^n)$ be the solution to
	\begin{equation}
	\label{eq:Dirichlet_omega_Thm}
	\Dsn u_\infty= f_\infty\quad\text{in $\tHs(\omega^n)'$}\,.
	\end{equation}
	Then $\Dsnhalf u_{\ell}\to \Dsnhalf u_\infty$ in $L^2-$mean, that is,
	\begin{equation}
	\label{eq:tesi}
	\fint\limits_{\ell B^k}dt\irn\big|\Dsnhalf u_{\ell}-\Dsnhalf u_\infty\big|^2 dx \to 0\,.
	\end{equation}
\end{Theorem}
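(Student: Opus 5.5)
The plan is to reduce Theorem \ref{Pb_lin_new} to Theorem \ref{Pb_lin_2} through the rescaling $R_\ell$ in (\ref{eq:rescaling}). Define $v_\ell\in\tHs(\omega^n\times B^k)$ by $u_\ell=R_\ell v_\ell$, that is, $v_\ell(x,t)=\ell^{k/2}u_\ell(x,\ell t)$, and set $g_\ell(x,t)=\ell^{k/2}f_\ell(x,\ell t)$. For every $\f\in\tHs(\omega^n\times B^k)$, formula (\ref{eq:Lsell0}) and a change of variables give
$$
\langle\mathcal L^s_\ell v_\ell,\f\rangle=\langle\Dsnp u_\ell,R_\ell\f\rangle=\iintl f_\ell\,\overline{R_\ell\f}\,dxdt=\iintl g_\ell\,\overline{\f}\,dxdt .
$$
Hence $v_\ell$ solves (\ref{eq:Dirichlet_ell}) with datum $g_\ell$. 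Because the map $w\mapsto \ell^{k/2}w(\cdot,\ell\,\cdot)$ is an isometry of $L^2$, we get
$$
\|g_\ell-g\|^2_{L^2(\omega^n\times B^k)}=\ell^{-k}\iintl_{\omega^n\times\ell B^k}|f_\ell-f_\infty|^2 = |B^k|\fint\limits_{\ell B^k}dt\int\limits_{\omega^n}|f_\ell-f_\infty|^2dx\to0,
$$
where $g(x,t)=\ell^{k/2}f_\infty(x)$. This $g$ depends on $\ell$, which is a problem. The fix is to normalize differently: take $v_\ell(x,t)=u_\ell(x,\ell t)$ and $g_\ell(x,t)=f_\ell(x,\ell t)$ with no $\ell^{k/2}$ factor. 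Both sides of the equation are linear and carry the same factor $\ell^{-k/2}$, so the equation $\mathcal L^s_\ell v_\ell=g_\ell$ still holds, since $\Dsnp(\ell^{-k/2}\cdot)$ commutes with scalars. With this choice the computation above shows $g_\ell\to g:=f_\infty\otimes 1$ in $L^2(\omega^n\times B^k)$.

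Theorem \ref{Pb_lin_2} then gives $v_\ell\to v$ in $\Hdiesis(\omega^n\times B^k)$, where $v$ solves (\ref{eq:Dirichlet_omega}) with $g(x,t)=f_\infty(x)$. I will identify $v$ as $v(x,t)=u_\infty(x)\chi_{B^k}(t)$. Every $\f\in\mathcal C^\infty_c(\omega^n\times B^k)$ satisfies $\f(\cdot,t)\in\tHs(\omega^n)$ for each $t$. Using (\ref{eq:Dsn_diesis}) and (\ref{eq:Dirichlet_omega_Thm}) slice by slice,
$$
\langle\Dsn(u_\infty\chi_{B^k}),\f\rangle=\int_{B^k}\langle\Dsn u_\infty,\f(\cdot,t)\rangle\,dt=\int_{B^k}\!\int_{\omega^n}f_\infty\f\,dxdt .
$$
Uniqueness then yields $v=u_\infty\chi_{B^k}$, provided $u_\infty\chi_{B^k}\in\Hdiesis(\omega^n\times B^k)$. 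This membership is the one point that needs the structure results of Section \ref{S:space_diesis}. I plan to prove it by approximating $u_\infty$ in $\tHs(\omega^n)$ by $\psi_j\in\mathcal C^\infty_c(\omega^n)$ and $\chi_{B^k}$ in $L^2$ by $\eta_j\in \mathcal C^\infty_c(B^k)$. The products $\psi_j\eta_j$ then converge in $\|\cdot\|_\#$, because that norm factorizes on tensor products: $\|\psi\eta\|_\#=\|\eta\|_{L^2}\|\Dsnhalf\psi\|_{L^2}$.

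Finally I convert back. The operator $\Dsnhalf$ acts only in $x$, so $\Dsnhalf u_\ell(x,t)=\Dsnhalf v_\ell(x,t/\ell)$. Changing variables $t=\ell t'$ gives
$$
\fint\limits_{\ell B^k}dt\irn|\Dsnhalf u_\ell-\Dsnhalf u_\infty|^2dx=\frac1{|B^k|}\int_{B^k}dt'\irn|\Dsnhalf v_\ell-\Dsnhalf v|^2dx=\frac{\|v_\ell-v\|_\#^2}{|B^k|}\to0,
$$
which is (\ref{eq:tesi}). Most of this is bookkeeping. The main obstacle I expect is the identification of the limit $v$ with $u_\infty\otimes\chi_{B^k}$ inside $\Hdiesis$, namely the membership just discussed and the justification of slicewise testing. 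Rigorously checking that $R_\ell$ intertwines the equations as in (\ref{eq:Lsell0}) is the other point that needs care.
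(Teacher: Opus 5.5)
Your proposal is correct and follows the paper's proof: the same rescaling $v_\ell(x,t)=u_\ell(x,\ell t)$, $g_\ell(x,t)=f_\ell(x,\ell t)$ without the $\ell^{k/2}$ factor, then Theorem \ref{Pb_lin_2}, identification of the limit, and rescaling back. (Your displayed identity with the factor $\ell^{-k}$ holds for this second normalization, not for the first one you tried.) The only minor difference is in identifying the limit: you check that the candidate $u_\infty\chi_{B^k}$ solves (\ref{eq:Dirichlet_omega}) and invoke uniqueness, whereas the paper tests with $\f\psi$ to show that each slice $v(\cdot,t)$ solves (\ref{eq:Dirichlet_omega_Thm}); the membership $u_\infty\chi_{B^k}\in\Hdiesis(\omega^n\times B^k)$ that you plan to prove by hand is already given by Lemma \ref{embeddings} and the Remark following it.
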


 Roughly speaking, we see that while the cross-section $\omega^n$ becomes somehow negligible for $\ell$ large, the problems (\ref{eq:Dirichlet_ell_Thm})
on $\omega^n\times \ell B^k$ converge to the problem (\ref{eq:Dirichlet_omega_Thm}), which is settled on $\omega^n$.

\medskip

Next, we view $\mathcal L^s_\ell$ as an unbounded operator $L^2(\R^{n+k})\to L^2(\R^{n+k})$,
with domain 
$$
D(\mathcal L^s_\ell)=\big\{v\in \tHs(\omega^n\times B^k)~|~ \Dsnp v\in L^2(\omega^n\times B^k)~\!\big\}\,
$$
(meaning that, for $v\in D(\mathcal L^s_\ell)$ the restrictions of $\Dsnp v$ and $\mathcal L^s_\ell v$ to $\omega^n\times B^k$ belong to $L^2(\omega^n\times B^k)$).

By taking $g_\ell=g$ independent of $\ell$ in Theorem \ref{Pb_lin_2}, in view of \cite[Definition 13.3]{DM} we 
immediately infer the next result.

\begin{Corollary}
\label{C:G-convergence}
Let $\ell\to\infty$ be any divergent sequence. 
Then  the sequence of unbounded operators $\mathcal L^s_\ell:L^2(\R^{n+k})\to L^2(\R^{n+k})$  $G$-converges in the strong $L^2$-topology to the operator
$$
\Dsn :L^2(\R^{n+k})\to L^2(\R^{n+k})~\!,
$$
compare with (\ref{eq:Dsn_diesis}), with domain
$$
D(\Dsn\,)=\bigl\{u\in\Hdiesis(\omega^n\times B^k)~|~\Dsn u\in L^2(\R^n\times B^k)\bigr\}~\!.
$$
\end{Corollary}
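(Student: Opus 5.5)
The corollary follows by unwinding \cite[Definition 13.3]{DM} and checking each requirement against Theorem \ref{Pb_lin_2}. Recall that a sequence of operators $A_\ell$ $G$-converges in the strong $L^2$-topology to $A$ when, for every $g$ in $L^2$ (with the resolvent-type normalization of \cite{DM}, here the plain equation since all operators are coercive), the solutions $A_\ell^{-1}g$ converge strongly in $L^2$ to $A^{-1}g$. This has to be understood on the closed subspaces where the operators are invertible: $\overline{D(\mathcal L^s_\ell)}$ and $\overline{D(\Dsn)}$. Both of these contain $\mathcal C^\infty_c(\omega^n\times B^k)$, so their closures in $L^2(\R^{n+k})$ equal $L^2(\omega^n\times B^k)$ (extended by zero). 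The first step is therefore to record that $\mathcal L^s_\ell$ and $\Dsn$ are self-adjoint, positive operators with the stated domains, obtained from the closed coercive quadratic forms $Q_\ell$ on $\tHs(\omega^n\times B^k)$ and $\|\cdot\|^2_\#$ on $\Hdiesis(\omega^n\times B^k)$ respectively. This is the standard Friedrichs/Kato correspondence. The only input it needs is that the form domains embed continuously and densely into $L^2(\omega^n\times B^k)$, which for $\Hdiesis$ follows from the Poincaré-type inequality $\|v\|^2_\#\ge \lambda_1^s(\omega^n)\|v\|_{L^2}^2$, obtained by applying the first eigenvalue bound on $\omega^n$ slice by slice in $t$.

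The second step is to identify the inverses. For $g\in L^2(\omega^n\times B^k)$, the element $v_\ell=(\mathcal L^s_\ell)^{-1}g$ is exactly the weak solution of (\ref{eq:Dirichlet_ell}) with $g_\ell\equiv g$. Likewise, $v=(\Dsn)^{-1}g$ is the weak solution of (\ref{eq:Dirichlet_omega}): because $g\in L^2$, the weak solution automatically lies in the operator domain $D(\Dsn)$. Theorem \ref{Pb_lin_2}, applied with the constant sequence $g_\ell=g$, gives $v_\ell\to v$ in $\Hdiesis(\omega^n\times B^k)$. The Poincaré inequality above then upgrades this to $v_\ell\to v$ strongly in $L^2(\R^{n+k})$.

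The only point that needs care, and hence the main obstacle, is matching the precise formulation of \cite[Definition 13.3]{DM}. That definition is phrased in terms of $P_\ell(A_\ell+\lambda I)^{-1}P_\ell$ for some $\lambda\ge0$, where $P_\ell$ is the orthogonal projection onto the closure of the domain, and convergence is required for all $g\in L^2(\R^{n+k})$. Here every projection is the same projection $P$ onto $L^2(\omega^n\times B^k)$. For arbitrary $g\in L^2(\R^{n+k})$, I would therefore apply the argument above to $Pg$. If the definition requires a shift $\lambda>0$, I would rerun Theorem \ref{Pb_lin_2} with $g_\ell=g-\lambda v_\ell$. This needs the a priori bound on $v_\ell$ and a bootstrap, so it is simplest to take $\lambda=0$, which the definition allows because all the operators are uniformly coercive.
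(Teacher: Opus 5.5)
Your argument is correct and follows the paper's own proof: take $g_\ell=g$ in Theorem \ref{Pb_lin_2} and unwind \cite[Definition 13.3]{DM}; the uniform bound $Q_\ell(v)\ge\|v\|_\#^2\ge\lambda_1^s(\omega^n)\|v\|_{L^2}^2$ justifies using plain inverses, the common projection onto $L^2(\omega^n\times B^k)$ handles general data, and the continuous embedding $\Hdiesis(\omega^n\times B^k)\hookrightarrow L^2$ turns $\Hdiesis$-convergence into strong $L^2$-convergence. Drop your fallback for a shifted resolvent, though: applying Theorem \ref{Pb_lin_2} with $g_\ell=g-\lambda v_\ell$ is circular, since it presupposes strong $L^2$-convergence of $v_\ell$ and the embedding is not compact (if a shift were ever needed, you would instead rerun the proof of Theorem \ref{Pb_lin_2} with the equivalent norm $\|\cdot\|_\#^2+\mu\|\cdot\|_{L^2}^2$), but by uniform coercivity it is not needed.
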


Finally, we extend the quadratic form $Q_\ell: \tHs(\omega^n\times B^k)\to\R$, defined in (\ref{eq:Q_ell}),
to $L^2(\R^{n+k})$ by setting $Q_\ell\equiv \infty$ outside $\tHs(\omega^n\times B^k)$.
Then, Corollary \ref{C:G-convergence}, together with \cite[Theorem 13.5]{DM}, immediately yields
the following consequence.

\begin{Corollary}\label{Thm-Gamma}
	Let $\ell\to\infty$ be any divergent sequence. Then
	$$
	\Gamma\!-\!\displaystyle\lim_{\ell\to\infty}Q_\ell (v)=
	\begin{cases}
		\displaystyle\int\limits_{B^k}dt\int\limits_{\R^n}|\Dsnhalf v|^2dx&\text{if}\ \,v\in\Hdiesis(\omega^n\times B^k)\,,\\
		\,\infty&\text{elsewhere in}\ \,L^2(\R^{n+k})\,,
		\end{cases}
	$$
	in the strong $L^2$-topology.
\end{Corollary}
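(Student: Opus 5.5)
The plan is to deduce Corollary \ref{Thm-Gamma} from the abstract link between $G$-convergence of operators and $\Gamma$-convergence of their quadratic forms, namely \cite[Theorem 13.5]{DM}. That theorem states, roughly, the following. Let $A_h$ and $A$ be nonnegative self-adjoint operators, possibly defined only on closed subspaces, and let $F_h$, $F$ be the associated closed quadratic forms, extended by $+\infty$ outside the form domains. Then $A_h$ $G$-converges to $A$ in the strong topology of the Hilbert space, with uniform coercivity, if and only if $F_h$ $\Gamma$-converges to $F$ in the strong topology. So the argument splits into two parts: first identify $Q_\ell$ and the candidate limit as the forms of $\mathcal L^s_\ell$ and of $\Dsn$ respectively, and then check the hypotheses of that theorem, feeding in Corollary \ref{C:G-convergence}.

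\textbf{Identifying the forms.} By (\ref{eq:Lsell0}) and (\ref{eq:Q_ell}), $Q_\ell(v)=\langle\mathcal L^s_\ell v,v\rangle$ for every $v\in\tHs(\omega^n\times B^k)$, so $Q_\ell$ is the form associated with $\mathcal L^s_\ell$. Its domain $\tHs(\omega^n\times B^k)$ is complete, hence $Q_\ell$ is closed, and it is lower semicontinuous on $L^2(\R^{n+k})$ once set to $\infty$ elsewhere. Likewise, the functional $\int_{B^k}dt\int_{\R^n}|\Dsnhalf v|^2\,dx$ on $\Hdiesis(\omega^n\times B^k)$ is exactly the form of the operator $\Dsn$ defined in (\ref{eq:Dsn_diesis}). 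Its domain is the completion appearing in (\ref{eq:norm_diesis}), which embeds continuously into $L^2$, so this form is also closed.

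\textbf{Equi-coercivity.} We need a bound $Q_\ell(v)\ge c\|v\|_{L^2}^2$ with $c$ independent of $\ell\ge1$, since this is the uniform ellipticity that the abstract theorem requires. From (\ref{eq:Q_ell}), $(|\xi|^2+\ell^{-2}|\tau|^2)^s\ge|\xi|^{2s}$, hence $Q_\ell(v)\ge\|v\|_\#^2$. It then suffices to show $\|v\|_\#^2\ge\lambda_1^s(\omega^n)\|v\|_{L^2}^2$. Fix $t\in B^k$; the slice $v(\cdot,t)$ lies in $\tHs(\omega^n)$ for a.e. $t$, so the Rayleigh quotient on $\omega^n$ gives the inequality slice by slice, and integrating over $B^k$ yields it for $v$. (The left inequality in Theorem \ref{T:eigenvalue1} or in Theorem \ref{T:eigenvalue2} gives the same bound, $Q_\ell(v)\ge\lambda_1^s(\omega^n\times\ell B^k)\|v\|_{L^2}^2>\lambda_1^s(\omega^n)\|v\|_{L^2}^2$.) The same estimate shows that $\Hdiesis$ carries an $L^2$-coercive norm.

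\textbf{Conclusion.} Corollary \ref{C:G-convergence} provides $G$-convergence along any divergent sequence $\ell$. Combined with equi-coercivity and closedness of the forms, \cite[Theorem 13.5]{DM} then gives $\Gamma$-convergence of $Q_\ell$ to the stated functional in the strong $L^2$-topology.

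The main obstacle is checking that the setting of \cite{DM} fits. Our operators are defined on $L^2(\R^{n+k})$ but their domains are dense only in proper closed subspaces, namely $L^2(\omega^n\times B^k)$ (identified with functions vanishing outside $\omega^n\times B^k$). One must confirm that \cite[Theorem 13.5]{DM} admits operators on closed subspaces, via the resolvent defined through orthogonal projection onto the subspace. Then $G$-convergence, read as strong convergence of the resolvents $(\mathcal L^s_\ell)^{-1}P$, is exactly Theorem \ref{Pb_lin_2} with $g_\ell=g$ independent of $\ell$. Here the convergence is in $\Hdiesis$, which is stronger than the $L^2$ convergence needed, thanks to the coercivity established above.

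If direct verification is preferred, the liminf inequality follows from the pointwise lower bound $Q_\ell\ge\|\cdot\|_\#^2$. Weak $L^2$ lower semicontinuity of $\|\cdot\|_\#^2$ handles the case where the limit lies in $\Hdiesis$. If it does not, bounded energy along a subsequence would give a weak limit in $\Hdiesis$, a contradiction, so the liminf is $\infty$. For the recovery sequence, take $v\in\Hdiesis$, let $g=\Dsn v$, and first assume $g\in L^2$. Let $v_\ell$ solve (\ref{eq:Dirichlet_ell}) with $g_\ell=g$. Then $Q_\ell(v_\ell)=\int gv_\ell\to\int gv=\|v\|_\#^2$, and $v_\ell\to v$ by Theorem \ref{Pb_lin_2}. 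For general $v$, approximate by such elements in $\Hdiesis$ and use a diagonal argument.
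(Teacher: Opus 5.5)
Your proposal is correct and follows the paper's route: the paper gets this corollary in one line by combining Corollary \ref{C:G-convergence} with \cite[Theorem 13.5]{DM}, and your checks supply the hypotheses the paper leaves implicit, namely the identification of the forms via (\ref{eq:Lsell0}), their closedness, the uniform bound $Q_\ell(v)\ge\|v\|_\#^2\ge\lambda^s_1(\omega^n)\|v\|^2_{L^2}$, and the closed-subspace setting with projection onto $L^2(\omega^n\times B^k)$. Your direct liminf/recovery-sequence argument is a valid unpacking of that abstract theorem in this setting, and its liminf part even works along weakly convergent sequences.
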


In case $s\in(0,1)$ and $B^k$ convex, the $\Gamma$-limit of the quadratic forms $Q_\ell$ has been computed only for
functions $v$ belonging to the smaller space $\tHs(\omega^n\times B^k)$, compare with  \cite[Theorem 4.2]{AFM}.

\medskip
The paper is organized as follows. 
In Section \ref{S:preliminary} we introduce the notation, recall few known results, 
and establish formula \eqref{eq:Lsell0}. 
Theorems \ref{T:eigenvalue1} and \ref{T:eigenvalue2} are proved in Section \ref{S:eigenvalues}; Section \ref{S:space_diesis} is devoted to the study of the space ${\Hdiesis(\omega^n\times B^k)}$, and Section \ref{S:GGamma} contains the proofs of Theorems \ref{Pb_lin_2} and \ref{Pb_lin_new}. Finally, in the Appendix we state and prove Corollary \ref{T:Braides1}, concerning (nonlocal) problems on thinning tubes, and present some remarks and refinements of our main results.

\vskip0.2truecm

\section{Notation and preliminaries}
\label{S:preliminary}

Our main reference for fractional Sobolev spaces is H. Triebel's monograph \cite{Tr}; 
for $G$- and $\Gamma$-convergence, we refer to the monographs by G. Dal Maso \cite{DM} and A. Braides \cite{Braides}.

\paragraph{Notation.} Let $d\ge 1$ be an integer and
let $D\subset \R^d$ be open (the cases of interest are $d=n$, $D=\omega^n$,  and $d=n+k$, $D=\omega^n\times\ell B^k$).
Any   $f\in L^2(D)$ is extended by the null function
outside $D$, so that  we have the isometry
\begin{equation}
\label{eq:null}
L^2(D)\hookrightarrow L^2(\R^d)~\!.
\end{equation}
In a similary way, we agree that $\mathcal C^\infty_c(D)$ is a subspace of $\mathcal C^\infty_c(\R^d)$.

For $u\in L^2(\R^d)$ the Fourier transform $\cF_d[u]$ is defined by
$$
	\mathcal F_d[u](\zeta)=\frac1{(2\pi)^{d/2}}\intl_{\R^d} e^{-iz\cdot\zeta}u(z)~\!dz\,.
$$
Given any real number $s>0$, the Sobolev space
$$
H^s(\R^d) = \big\{u\in L^2(\R^d)~|~ \Dshalfd u:=\mathcal F_d^{-1}\big[|\,\cdot\,|^{2s}\mathcal F_d[u]\big]\in L^2(\R^d)~\!\big\}
$$
naturally inherits a Hilbertian structure with norm
$$
\|u\|^2 = \intl_{\R^d}|\Dshalfd u|^2~\!dz+ \intl_{\R^d}|u|^2~\!dz= \intl_{\R^d}(|\zeta|^{2s}+1)|\cF_d [u]|^2~\!d\zeta~\!.
$$
For  a given (possibly unbounded) open set $D\subset\R^d$  we put
$$
\tHs(D)=\overline{\mathcal C^\infty_c(D)}^{H^s(\R^{d})}
$$
(denoted by $\mathring{H}^{s}$ in Triebel's monograph \cite{Tr}),
which is a closed subspace of $H^s(\R^d)$. 
If $D$ is bounded, then $\tHs(D)$ is compactly embedded into $L^2(D)$, thus the spectrum of $\Dsd$ on $\tHs(D)$ is
discrete and consists of a non decreasing, divergent sequence of eigenvalues
$$
0<{\lambda^s_1}(D)\le {\lambda^s_2}(D)\le {\lambda^s_3}(D)\le\dots\,,
$$
where
$$
\lambda_1^s(D)=\inf_{v\in \tHs(D)\atop v\neq 0}
\frac{\displaystyle\intl_{\R^{d}}|\Dshalfd v|^2~\!dz}{\displaystyle\intl_{D}|v|^2\,dz}~\!.
$$
Each eigenvalue is repeated according to its multiplicity. 
In particular, the first eigenvalue $\lambda^s_1(D)$ may have multiplicity greater than one when $s>1$,
see for instance \cite[Chapter 3]{GGS} for $s=2$.
On the other hand, if $s\in(0,1]$ then $\lambda_1^s(D)$ is simple, hence ${\lambda^s_1}(D)< {\lambda^s_2}(D)$.

We identify $L^2(D)$ with a subspace of $\tHs(D)'$, which is the dual of $\tHs(D)$.
Given $f\in L^2(D)$, we say that $u\in \tHs(D)$ is a  solution to 
$$
\Dsd u=f\qquad \text{in $\tHs(D)'$}
$$
if $u$ solves $\Dsd u=f$ is a weak sense, that is, if 
$$\langle \Dsd u,\f\rangle=\intl_{\R^d}(\Dshalfd u)~\!{(\Dshalfd \f)}~\!dz=\intl_{\R^d} f\f~\!dz\qquad \text{for any }\f\in \tHs(D)\,.$$
A similar notation is used for equations involving $\mathcal L_\ell^s:\tHs(\omega^n\times B^k)\to \tHs(\omega^n\times B^k)'$, see \eqref{eq:ellesse}.

\paragraph{Preliminaries.} We recall few simple facts taken from \cite{AFM}. 
For $u\in \tHs(\omega^n\times\ell B^k)$ we set
$$
\widetilde\E^s_{\ell}(u)= \intl_{\ell B^k}dt\irn|\Dsnhalf u|^2~\!dx+\intl_{\omega^n}dx\intl_{\R^k}|\DshalfUno u|^2\,dt\,.
$$
In the local case $s=1$ we trivially have
$$
\widetilde\E^{\,1}_{\ell}(u)=\intl_{\ell B^k}dt\intl_{\omega^n}|\nabla_{\!x} u|^2~\!dx+\intl_{\omega^n}dx\intl_{\ell B^k}|\nabla_{\!t} u|^2~\!dt
=\iint\limits_{\omega^n\times\ell B^k}|\left(-\Delta_{n+k}\right)^{\frac12}\! u|^2\,dxdt\,.
$$
In contrast, in the (possibly) nonlocal case $s\neq1$ it was shown in \cite[Lemma 2.2 and Remark 2.3]{AFM}
that this nice splitting never occurs. In fact, the next lemma holds.

\begin{Lemma}
\label{L:poincare}
Let $s\neq 1$ and let $u\in\tHs(\omega^n\times\ell B^k)$ be nontrivial. Then $u(\cdot,t)\in \tHs(\omega^n)$ for a.e. $t\in\R^k$ and $u(x,\cdot)\in H^s_0(B^k)$ for a.e. $x\in \R^n$. In addition, for any $\ell>0$, 
\begin{gather}
	\label{eq:first}
	{\lambda^s_1}(\omega^n)\iintl_{\omega^n\times\ell B^k}|u|^2 dxdt< \intl_{\ell B^k}dt\irn |\Dsnhalf u|^2~\!dx< \iint\limits_{\R^{n+k}} |\Dsnphalf u|^2~\! dxdt\,,\\
	\label{eq:second_bis}
	\min\{1,2^{s-1}\} \widetilde\E^s_{\ell}(u) <\iint\limits_{\R^{n+k}} |\Dsnphalf u|^2~\! dxdt<\max\{1,2^{s-1}\}\widetilde\E^s_{\ell}(u)\,.
\end{gather}
\end{Lemma}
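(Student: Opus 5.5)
The statement to prove is Lemma \ref{L:poincare}. The plan is to work on the Fourier side throughout, using Plancherel in the $x$ and $t$ variables separately, and to obtain the strict inequalities from the pointwise bounds on the symbol together with the fact that equality would force $\cF_{n+k}[u]$ to vanish on a set of positive measure.

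\textbf{Trace properties.} Take $u_j\in\mathcal C^\infty_c(\omega^n\times\ell B^k)$ with $u_j\to u$ in $H^s(\R^{n+k})$. Since $|\xi|^{2s}\le(|\xi|^2+|\tau|^2)^s$, Plancherel in $t$ gives
$$
\int_{\R^k}dt\int_{\R^n}(1+|\xi|^{2s})|\cF_n[u_j-u](\xi,t)|^2\,d\xi\le \|u_j-u\|^2_{H^s}\to0 .
$$
After passing to a subsequence, $u_j(\cdot,t)\to u(\cdot,t)$ in $H^s(\R^n)$ for a.e.\ $t$. Each $u_j(\cdot,t)$ lies in $\mathcal C^\infty_c(\omega^n)$, so $u(\cdot,t)\in\tHs(\omega^n)$ for a.e.\ $t$. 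The same argument in the $x$ variable shows $u(x,\cdot)\in\tHs(\ell B^k)$ for a.e.\ $x$.

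\textbf{The inequality \eqref{eq:first}.} For the left-hand inequality, apply the variational characterization of $\lambda^s_1(\omega^n)$ to each slice $u(\cdot,t)$ and integrate in $t$; this gives the non-strict version. Strictness: if equality held, then for a.e.\ $t$ the slice $u(\cdot,t)$ would be either zero or a first eigenfunction on $\omega^n$. The first eigenfunctions span a finite-dimensional space $E$, so $u(x,t)=\sum_i a_i(t)e_i(x)$ with $a_i\in L^2$ supported in $\ell B^k$. Testing the equality $\iint|\Dsnphalf u|^2=\dots$ is not needed here; instead I use that $u\in H^s(\R^{n+k})$ forces $\cF_{n+k}[u](\xi,\tau)=\sum_i\hat a_i(\tau)\hat e_i(\xi)$ with $\hat a_i$ analytic (compact support), and I plan to combine this with the strict middle-to-right inequality below. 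In fact it is cleaner to prove the chain as
$$
\lambda^s_1(\omega^n)\iint|u|^2\le\int_{\ell B^k}dt\int_{\R^n}|\Dsnhalf u|^2dx<\iint|\Dsnphalf u|^2 .
$$
Strictness of the second inequality: the integrand difference $\big((|\xi|^2+|\tau|^2)^s-|\xi|^{2s}\big)|\cF_{n+k}[u]|^2$ is nonnegative, with the bracket positive for $\tau\ne0$. Moreover $\cF_{n+k}[u]$ is real-analytic, since $u$ has compact support, and nonzero, so it cannot vanish a.e. Hence the difference is strictly positive. This also gives the strict outer inequality in \eqref{eq:first}, which is what is ultimately needed. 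If a genuinely strict first inequality is required, I would argue that equality would force $\hat a_i(\tau)$ with $a_i\in L^2(\ell B^k)$ while $e_i$ cannot be extended\dots{} I expect this to be the delicate point. The fallback is that the first eigenfunctions of $\Dsn$ on $\omega^n$ have full support in $\omega^n$ but are not in $\tHs$ of a smaller set; a slice structure $a(t)e(x)$ is still admissible, however. So I would present strictness only where the symbol gap gives it, using analyticity as above.

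\textbf{The inequality \eqref{eq:second_bis}.} By Plancherel in each variable separately,
$$
\widetilde\E^s_\ell(u)=\iint(|\xi|^{2s}+|\tau|^{2s})|\cF_{n+k}[u]|^2\,d\xi\, d\tau .
$$
It then suffices to use the elementary bounds, valid for $a,b\ge0$,
$$
\min\{1,2^{s-1}\}(a^s+b^s)\le(a+b)^s\le\max\{1,2^{s-1}\}(a^s+b^s),
$$
with $a=|\xi|^2$, $b=|\tau|^2$. These bounds follow from subadditivity of $t\mapsto t^s$ when $s<1$ and from convexity when $s>1$. For $s\neq1$ equality holds only on a null set of $(a,b)$, namely $ab=0$ or $a=b$ depending on which side. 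Since $\cF_{n+k}[u]$ is analytic and nontrivial, it does not vanish a.e., so both inequalities are strict.

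The main obstacle is the strictness claims. Each one reduces to this analyticity fact, via Paley--Wiener, since $u$ has compact support in $\overline{\omega^n\times\ell B^k}$ and $\cF_{n+k}[u]$ is therefore real-analytic.
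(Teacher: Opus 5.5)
You prove everything here that is actually true, and your route matches the paper's style. The paper gives no proof of this lemma; it cites \cite[Lemma 2.2 and Remark 2.3]{AFM}. Your method, Plancherel in one group of variables plus a pointwise comparison of symbols, is the same one the paper uses to obtain (\ref{eq:rk}) in the proof of Theorem \ref{Pb_lin_2}. The following parts are all correct:
\begin{itemize}
\item the slicing argument giving $u(\cdot,t)\in\tHs(\omega^n)$ and $u(x,\cdot)\in\tHs(\ell B^k)$ (you rightly write $\ell B^k$; the $B^k$ in the statement is a misprint);
\item the identity $\widetilde\E^s_\ell(u)=\iintl_{\R^{n+k}}(|\xi|^{2s}+|\tau|^{2s})|\cF_{n+k}[u]|^2d\xi d\tau$;
\item the strict middle-to-right inequality in (\ref{eq:first}) and both strict inequalities in (\ref{eq:second_bis}).
\end{itemize}
Two simplifications are available. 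First, you do not need analyticity or Paley--Wiener. Since $u\neq0$, $\cF_{n+k}[u]$ is nonzero on a set of positive measure, and each symbol gap vanishes only on a null set ($\{\tau=0\}$, $\{|\xi||\tau|=0\}$ or $\{|\xi|=|\tau|\}$). Second, in each regime $s<1$ and $s>1$ the two-sided elementary bound needs two facts: sub- or superadditivity for one side, and concavity or convexity for the other.

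The point you leave open, strictness of the left inequality in (\ref{eq:first}), is not delicate: it is false. You should state this with a counterexample and drop the unfinished argument about the eigenspace and the analytic $\hat a_i$. Let $\f_1\in\tHs(\omega^n)$ be a first eigenfunction and $\psi\in\mathcal C^\infty_c(\ell B^k)\setminus\{0\}$. Then $u=\f_1\psi\in\tHs(\omega^n\times\ell B^k)$: approximate $\f_1$ in $H^s(\R^n)$ by $\mathcal C^\infty_c(\omega^n)$ functions and use $(|\xi|^2+|\tau|^2)^s\le c(|\xi|^{2s}+|\tau|^{2s})$. For this $u$,
\[
\intl_{\ell B^k}dt\irn|\Dsnhalf u|^2dx=\|\psi\|^2_{L^2(\ell B^k)}\irn|\Dsnhalf\f_1|^2dx=\lambda^s_1(\omega^n)\iintl_{\omega^n\times\ell B^k}|u|^2dxdt\,.
\]
The paper itself uses this kind of product as a test function in the proof of Theorem \ref{T:eigenvalue2}. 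The strictness argument in the proof of Theorem \ref{T:eigenvalue3} likewise only rules out equality in both slice inequalities at once. So the correct chain is
\[
\lambda^s_1(\omega^n)\iintl_{\omega^n\times\ell B^k}|u|^2dxdt\le\intl_{\ell B^k}dt\irn|\Dsnhalf u|^2dx<\iintl_{\R^{n+k}}|\Dsnphalf u|^2dxdt\,,
\]
with the outer inequality strict, which is exactly what you proved. Every later use of (\ref{eq:first}) in the paper needs only this version: the embedding of $\Hdiesis(\omega^n\times B^k)$ into $L^2$, the proof of Lemma \ref{embeddings}, the bound in the proof of Theorem \ref{Pb_lin_2}, and Subsection A.1.
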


Thanks to Lemma \ref{L:poincare}, we can introduce the operator  $\Dsn\oplus\DsUno: \tHs(\omega^n\times\ell B^k)\to \tHs(\omega^n\times\ell B^k)'$ defined by
$$
\begin{aligned}
\langle [\Dsn&\oplus\DsUno~\!]u,v\rangle\\
&= \intl_{\ell B^k}dt\irn(\Dsnhalf u)(\Dsnhalf v)~\!dx+\intl_{\omega^n}dx\intl_{\R^k}(\DshalfUno u)(\DshalfUno v)\,dt\,
\end{aligned}
$$
for $u,v\in \tHs(\omega^n\times\ell B^k)$. Notice that
$$
\begin{aligned}
\langle [\Dsn\oplus\DsUno~\!]u,v\rangle
&=\intl_{\ell B^k}dt\irn|\xi|^{2s}\mathcal F_n[u]\overline{\mathcal F_n[v]}~\!d\xi+\intl_{\omega^n}dx\intl_{\R^k}|\tau|^{2s}\mathcal F_k[u]\overline{\mathcal F_k[v]}~\!d\tau\,.
\end{aligned}
$$
By (\ref{eq:second_bis}), the quadratic form $(u,v)\mapsto \langle [\Dsn\oplus\DsUno~\!]u,v\rangle$
provides an equivalent Hilbertian scalar product in $\tHs(\omega^n\times\ell B^k)$, with corresponding norm given by
$\|u\|^2=\tE_\ell^s(u)$. It follows that the spectrum of $\Dsn\oplus\DsUno$ on $\tHs(D)$ is
discrete and consists of a divergent sequence of positive eigenvalues, which will be computed in the following lemma.

\begin{Lemma}
\label{L:tilde}
The eigenvalues of $\Dsn\oplus\DsUno$ on $\tHs(\omega^n\times\ell B^k)$ have the form
$$
\lambda^s_\nu(\omega^n)+\frac{\lambda^s_j(B^k)}{\ell^{2s}}\,,\quad \nu, j\ge 1.
$$
\end{Lemma}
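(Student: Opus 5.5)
The plan is to separate variables. The quadratic form $\tE^s_\ell$ is a sum of two one-directional forms, so it should diagonalize on tensor products of eigenfunctions. Let $(\f_\nu)_{\nu\ge1}$ be an $L^2(\omega^n)$-orthonormal basis of eigenfunctions of $\Dsn$ on $\tHs(\omega^n)$, with eigenvalues $\lambda^s_\nu(\omega^n)$. Let $(\psi_j)_{j\ge1}$ be an $L^2(B^k)$-orthonormal basis of eigenfunctions of $\DsUno$ on $\tHs(B^k)$, with eigenvalues $\lambda^s_j(B^k)$.

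First we rescale the cross-section eigenfunctions. Put $\psi_j^\ell(t)=\ell^{-k/2}\psi_j(t/\ell)$. By the scaling $\cF_k[\psi_j^\ell](\tau)=\ell^{k/2}\cF_k[\psi_j](\ell\tau)$, the family $(\psi_j^\ell)$ is an orthonormal eigenbasis of $L^2(\ell B^k)$ for $\DsUno$ on $\tHs(\ell B^k)$, with eigenvalues $\ell^{-2s}\lambda^s_j(B^k)$.

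Next we set $w_{\nu j}(x,t)=\f_\nu(x)\psi^\ell_j(t)$ and check two things.

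\emph{Membership.} We need $w_{\nu j}\in\tHs(\omega^n\times\ell B^k)$. Approximate $\f_\nu$ and $\psi_j^\ell$ by $\mathcal C^\infty_c$ functions $\f^h,\psi^h$. Then $\f^h\psi^h\in\mathcal C^\infty_c(\omega^n\times\ell B^k)$. Using $(|\xi|^2+|\tau|^2)^s\le c(|\xi|^{2s}+|\tau|^{2s})$, the $H^s(\R^{n+k})$ norm of a product is controlled by $\|\cdot\|_{H^s(\R^n)}\|\cdot\|_{L^2}+\|\cdot\|_{L^2}\|\cdot\|_{H^s(\R^k)}$. Hence $\f^h\psi^h\to w_{\nu j}$ in $H^s(\R^{n+k})$.

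\emph{Eigenfunction identity.} For any $v\in\tHs(\omega^n\times\ell B^k)$, Lemma \ref{L:poincare} gives $v(\cdot,t)\in\tHs(\omega^n)$ and $v(x,\cdot)\in H^s_0(\ell B^k)$ for a.e. $t$ and $x$. Applying the weak eigenvalue equations slice by slice and using Fubini, we get
$$
\langle[\Dsn\oplus\DsUno]w_{\nu j},v\rangle=\Big(\lambda^s_\nu(\omega^n)+\frac{\lambda^s_j(B^k)}{\ell^{2s}}\Big)\iintl_{\omega^n\times\ell B^k}w_{\nu j}v\,dxdt .
$$
Indeed, the first term equals $\int_{\ell B^k}\psi_j^\ell(t)\,\lambda_\nu^s\int_{\omega^n}\f_\nu v(\cdot,t)\,dx\,dt$, and the second term is analogous.

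It remains to show completeness: these are all the eigenvalues. The family $\{w_{\nu j}\}$ is an orthonormal basis of $L^2(\omega^n\times\ell B^k)=L^2(\omega^n)\otimes L^2(\ell B^k)$. Suppose some eigenvalue $\mu$ had an eigenfunction $u$ not in the span of the $w_{\nu j}$ with eigenvalue $\mu$. By the symmetry of the form, eigenfunctions for distinct eigenvalues are $L^2$-orthogonal. Then $u$ minus its projection onto the span of the $w_{\nu j}$ with eigenvalue $\mu$ would be a nonzero function orthogonal to every $w_{\nu j}$, which is impossible.

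The main obstacle is the slicing step. We must justify that the form is computed by integrating the one-dimensional forms of the slices, and that each slice is an admissible test function for the weak eigen-equation. This is exactly what Lemma \ref{L:poincare} provides. The paper's definition of the form already writes it as an iterated integral, so the argument reduces to Fubini.
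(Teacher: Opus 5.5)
Your proof is correct and follows essentially the paper's route: the tensor products $\f_\nu\psi^\ell_j$ of eigenfunctions (with $\psi_j$ rescaled to $\ell B^k$) form an orthonormal basis of $L^2(\omega^n\times\ell B^k)$, and a completeness argument shows there are no other eigenvalues. The only minor difference is the last step: you use the $L^2$-orthogonality of eigenfunctions with distinct eigenvalues, whereas the paper expands $[\Dsn\oplus\DsUno]u$ in the basis and compares coefficients. Your version avoids needing the family to be an orthogonal basis of $\tHs(\omega^n\times\ell B^k)$, and you also supply the membership and slicing details that the paper leaves implicit.
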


\proof
The proof is obtained via separation of variables and elementary computations. We add few details for completeness.
We take an orthonormal basis $\{\f_\nu\}_\nu\subset\tHs(\omega^n)$ of $L^2(\omega^n)$ consisting by solutions  to 
$$
	\Dsn \f_\nu={\lambda^s_\nu}(\omega^n)\f_\nu\qquad \text{in $\tHs(\omega^n)'$\,.}
$$
These functions satisfy
$$
	\irn|\xi|^{2s}\cF_n[\f_\nu]~\!\overline{\cF_n[\f_h]}~\!d\xi={\lambda^s_\nu}(\omega^n)\int\limits_{\omega^n}\f_\nu\f_h dx={\lambda^s_\nu}(\omega^n)\delta_{\nu h}\,.
$$	
Then we take an orthonormal basis of $L^2(B^k)$ consisting by eigenfunctions $\psi_j\in \tHs(B^k)$ of $\DsUno$ on $\tHs(B^k)$, and put
$\psi_j^\ell(t)=\ell^{-\frac{k}{2}}\psi_j(\ell^{-1}{t})$, so that $\mathcal F_k[\psi^\ell_j](\tau)=\ell^\frac{k}{2}\mathcal F_k[\psi](\ell\tau)$.
It is easy to check that the family $\{\psi_j^\ell\}_j$ is an orthonormal basis for $L^2(\ell B^k)$ and moreover
$$
\begin{gathered}
\DsUno \psi_j^\ell=\frac{\lambda^s_j(B^k)}{\ell^{2s}}\psi^\ell_j\qquad \text{in $\tHs(B_\ell^k)'$,}\\
	\intl_{\R^k}|\tau|^{2s}\cF_k[\psi^\ell_j]~\!\overline{\cF_k[\psi^\ell_i]}~\!d\tau=\frac{\lambda^s_j(B^k)}{\ell^{2s}}\int\limits_{\ell B^k}\psi^\ell_j\psi^\ell_i dt=\frac{\lambda^s_j(B^k)}{\ell^{2s}}\delta_{ji}\,.
	\end{gathered}
$$	
Next, we observe that for each pair of indexes $\nu, j\ge 1$, the function
$$
\Phi_{\nu j}^\ell(x,t):=
\f_\nu(x)\psi^\ell_j(t)
$$
belongs to $\tHs(\omega^n\times\ell B^k)$ and solves 
$$
[\Dsn\oplus\DsUno~\!]\Phi_{\nu j}^\ell=\big(\lambda^s_\nu(\omega^n)+\ell^{-2s}\lambda^s_{j}(B^k)\big)\Phi_{\nu j}^\ell\qquad \text{in $\tHs(\omega^n\times\ell B^k)'$\,.}
$$
Moreover, the family 
$\{\Phi_{\nu j}^\ell\}_{\nu,j\ge 1}$ provides an orthonormal basis for $L^2(\omega^n\times\ell B^k)$ and an orthogonal basis for $\tHs(\omega^n\times\ell B^k)$. 
This means that for any $u\in \tHs(\omega^n\times\ell B^k)$, it holds that 
$$
u=\sum_{\nu,j=1}^\infty u_{\nu j}
\Phi_{\nu j}^\ell 
~,\quad \text{where}\quad u_{\nu j}=\iintl_{\omega^n\times\ell B^k} u(x,t)\f_\nu(x)\psi^\ell_j(t)~\!dxdt\,,
$$
with convergence in $L^2(\omega^n\times\ell B^k)$, and
$$
[\Dsn\oplus\DsUno~\!]u= \sum_{\nu,j=1}^\infty \big(\lambda_\nu^s(\omega^n)+\ell^{-2s}\lambda_j^s(B^k)\big)u_{\nu j}
\Phi_{\nu j}^\ell\,,
$$
with convergence in the dual space $\tHs(\omega^n\times\ell B^k)'$. 
It follows that if $u\in \tHs(\omega^n\times\ell B^k)\setminus\{0\}$ solves 
$$
[\Dsn\oplus\DsUno~\!]u=\lambda u\qquad \text{in $\tHs(\omega^n\times\ell B^k)'$}
$$
for some $\lambda\in\R$,
then
$$
\lambda u_{\nu j} \Phi_{\nu j}^\ell= 
\big(\lambda_\nu^s(\omega^n)+\ell^{-2s}\lambda_j^s(B^k)\big)u_{\nu j}\Phi_{\nu j}^\ell
$$
for any pair of indexes $\nu, j\ge 1$. Thus there exists a unique pair of indexes $\nu, j$ such that $u_{\nu j}\neq 0$ and
 $\lambda= \lambda_\nu^s(\omega^n)+\ell^{-2s}\lambda_j^s(B^k)$. 
\QED 

We conclude this preliminary section by proving formula \eqref{eq:Lsell0}, which follows by a straightforward computation.

\paragraph{Proof of (\ref{eq:Lsell0})}
Notice that  
$\mathcal F_{n+k}[R_\ell w](\xi,\tau)= \ell^\frac{k}{2} \mathcal F_{n+k}[w](\xi,\ell\tau)$ for  $w\in \mathcal C^\infty_c(\omega^n\times B^k)$. Thus, 
for any $v,\f\in \mathcal C^\infty_c(\omega^n\times B^k)$ we can compute
$$
\begin{aligned}
\langle \mathcal L^s_\ell v,\f\rangle &
=\iintl_{\R^{n+k}}\big(|\xi|^2+|\tau|^2\big)^{\!s}\mathcal F_{n+k}[R_\ell v]\overline{\mathcal F_{n+k}[R_\ell \f]}~\!d\xi d\tau
\\
&=\ell^{k}\iintl_{\R^{n+k}}\big(|\xi|^2+|\tau|^2\big)^{\!s}\mathcal F_{n+k}[v](\xi,\ell\tau)\overline{\mathcal F_{n+k}[\f](\xi,\ell\tau)}~\!d\xi d\tau
\\
&=\iintl_{\R^{n+k}}\big(|\xi|^2+\ell^{-2}|\tau|^2\big)^{\!s}\mathcal F_{n+k}[v](\xi,\tau)\overline{\mathcal F_{n+k}[\f](\xi,\tau)}~\!d\xi d\tau~\!.
\end{aligned}
$$
Since $\mathcal C^\infty_c(\omega^n\times B^k)$ is dense in $\tHs(\omega^n\times B^k)$, the proof is complete. 
\QED

\section{Eigenvalues}
\label{S:eigenvalues}

\paragraph{Proof of Theorem \ref{T:eigenvalue1}}
Since $s\in(0,1)$, then (\ref{eq:second_bis}) gives
$$
\iint\limits_{\R^{n+k}} |\Dsnphalf u|^2~\! dxdt\le \widetilde\E^s_{\ell}(u) \quad\ \, \text{for any $u\in\tHs(\omega^n\times\ell B^k)$\,.}
$$
Therefore, the Courant–Fischer–Weyl variational principle implies that
$\lambda^s_m(\omega^n\times\ell B^k)\le \tlambda(\omega^n\times\ell B^k)$ for any fixed $m\ge 1$, where $\tlambda(\omega^n\times\ell B^k)$ is the $m$-th eigenvalue of the operator $\Dsn\oplus\DsUno$ on $\tHs(\omega^n\times\ell B^k)$. Since these eigenvalues are both achieved, it holds that
$$
\lambda^s_m(\omega^n\times\ell B^k)< \tlambda(\omega^n\times\ell B^k)\,.
$$

\medskip

From Lemma \ref{L:tilde} we
infer that (\ref{eq:weak2}) holds, provided that 
$\lambda^s_1(\omega^n)+\ell^{-2s}{\lambda^s_m(B^k)}$ coincides with the $m$-th eigenvalue of $\Dsn\oplus\DsUno$ on $\tHs(\omega^n\times\ell B^k)$,
that is, if 
$$
\lambda^s_1(\omega^n)+\frac{\lambda^s_m(B^k)}{\ell^{2s}}
\le \lambda^s_2(\omega^n)+\frac{\lambda^s_1(B^k)}{\ell^{2s}}\,.
$$
Since $s\in(0,1)$, we know that $\lambda^s_1(\omega^n)$ is simple. Thus (\ref{eq:weak2}) holds true, provided that 
$\ell>0$ is large enough.
The proof is complete.
 \QED

\paragraph{Proof of Theorem \ref{T:eigenvalue2}.}
Recall that 
$$
{\lambda^s_1}(\omega^n)=\inf_{v\in \tHs(\omega^n)\atop v\neq 0}
\frac{\displaystyle\intl_{\R^n}|\Dsnhalf v|^2~\!dx}{\displaystyle\intl_{\omega^n}|v|^2\,dx}
\,,\quad
{\lambda^s_1}(\omega^n\times\ell B^k)=\inf_{v\in \tHs(\omega^n\times\ell B^k)\atop v\neq 0}
\frac{\displaystyle\iintl_{\R^{n+k}}|\Dsnphalf v|^2~\!dxdt}{\displaystyle\iintl_{\omega^n\times\ell B^k}|v|^2\,dxdt}\,.
$$
The first eigenvalue $\lambda_1^s(\omega^n\times\ell B^k)$ is achieved by a  function 
$\phi_\ell\in \tHs(\omega^n\times\ell B^k)\setminus\{0\}$. Since $s>1$, then (\ref{eq:second_bis}) in Lemma \ref{L:poincare} gives
$$
\begin{aligned}
	\iint\limits_{\R^{n+k}} |\Dsnphalf \phi_\ell|^2~\! dxdt 
	&> \intl_{\ell B^k}dt\irn|\Dsnhalf \phi_\ell|^2~\!dx+\intl_{\omega^n}dx\intl_{\R^k}|\DshalfUno \phi_\ell|^2\,dt\\
	&\ge\lambda_1^s(\omega^n)\intl_{\ell B^k}dt\intl_{\omega^n}|\phi_\ell|^2~\!dx+\lambda_1^s(\ell B^k)\intl_{\omega^n}dx\intl_{\ell B^k}|\phi_\ell|^2\,dt\,.
\end{aligned}
$$
It is readily proved that $\lambda_1^s(\ell B^k)=\ell^{-2s}\lambda_1^s(B^k)$, use the rescaling function $R_\ell$ in (\ref{eq:rescaling}). This leads to
$$
	\lambda_1^s(\omega^n\times\ell B^k)\!\iintl_{\omega^n\times\ell B^k}\!|\phi_\ell|^2~\! dxdt= \iint\limits_{\R^{n+k}} |\Dsnphalf \phi_\ell|^2~\! dxdt 
	>
	 \big(\lambda_1^s(\omega^n)+ \ell^{-2s} \lambda_1^s(B^k)\big)\!\iintl_{\omega^n\times\ell B^k}\!|\phi_\ell|^2~\! dxdt\,,
$$
which concludes the proof of 
the left-hand side inequality in (\ref{eq:weak}).

\medskip

To estimate $\lambda^s_1(\omega^n\times\ell B^k)$ from above we take $\f_1\in \tHs(\omega^n)$ solving 
$$
\Dsn \f_1={\lambda^s_1}(\omega^n)\f_1\qquad \text{in $\tHs(\omega^n)'$.}
$$
We normalize $\f_1$ in $L^2(\omega^n)$, so that
\begin{equation}
	\label{eq:fi1}
	\irn|\xi|^{2s}|\cF_n[\f_1]|^2~\!d\xi=\irn|\Dsnhalf \f_1|^2~\!dx={\lambda^s_1}(\omega^n)\int\limits_{\omega^n}|\f_1|^2dx={\lambda^s_1}(\omega^n)\,.
\end{equation}
Next, we take an eigenfunction $\psi_1\in \tHs(B^k)$ associated with the eigenvalue $\lambda^s_1(B^k)$.
After normalization,
this function satisfies
\begin{equation}
	\label{eq:psij1}
	\intl_{\R^k}|\tau|^{2s}|\cF_k[\psi_1]|^2~\!d\tau=\intl_{\R^k}|\DshalfUno \psi_1|^2~\!dt={\lambda^s_1}(B^k)\int\limits_{B^k}|\psi_1|^2dt={\lambda^s_1}(B^k)\,.
\end{equation}
Our aim is to prove that (\ref{eq:weak}) holds with
\begin{equation}
\label{eq:gamma0}
\gamma_s =\irn|\left(-\Delta_{n}\right)^{\frac{s-1}{2}}\f_1|^2dx~\intl_{\R^k}|\nabla_{\!t}\psi_1|^2dt\,.
\end{equation}
To this end we set
$$
\psi_1^\ell(t)=\ell^{-\frac{k}{2}}\psi_1\Big(\frac{t}{\ell}\Big)\,.
$$
Since $\f_1\psi_1^\ell\in \tHs(\omega^n\times\ell B^k)$ has unitary $L^2$-norm, we obtain
\begin{equation}
	\label{eq:max}
	\lambda_1^s(\omega^n\times\ell B^k)\le 
	{\displaystyle\iintl_{\R^{n+k}}|\Dsnphalf (\f_1\psi_1^\ell)|^2~\!dxdt}~\!.
\end{equation}
Now, we plainly get
$$
\iint\limits_{\R^{n+k}}|\Dsnphalf (\f_1\psi_1^\ell)|^2~\!dxdt=
\iintl_{\R^{n+k}}\big(|\xi|^2+\ell^{-2}|\tau|^2\big)^{\!s}|\cF_{n+k}[\f_1\psi_1]|^2d\xi d\tau\,.
$$
Using (\ref{eq:fi1}) and (\ref{eq:psij1}) we can compute
\begin{equation}
	\label{eq:start}
	\begin{aligned}
		\int\limits_{\R^n} |\xi|^{2s}|\mathcal F_n[\f_1]|^2\,d\xi\int\limits_{\R^k}|\cF_k[\psi_1]|^2\,d\tau&=\lambda_1^s(\omega^n)\,,\\
		\int\limits_{\R^n} |\mathcal F_n[\f_1]|^2\,d\xi\int\limits_{\R^k}|\tau|^{2s}|\cF_k[\psi_1]|^2\,d\tau&= \lambda_1^s(B^k)\,.
	\end{aligned}
\end{equation}
Since $s>1$,  
the Lagrange mean value theorem gives $\big(1+\zeta\big)^{\!s}< 1+s\big(1+\zeta\big)^{\!s-1}\zeta$ for any $\zeta>0$. Thus
\begin{equation}
\label{eq:elementary}
\big(1+\zeta\big)^{\!s}< 
1+sc_s\big(\zeta+\zeta^s\big)\qquad \text{for any $\zeta>0$\,,}
\end{equation}
where $c_s=\max\{1,2^{s-2}\}$. By setting $\zeta=\ell^{-2}|\tau|^2|\xi|^{-2}$, we infer that
$$
	\big(|\xi|^2+\ell^{-2}|\tau|^2\big)^{\!s}<|\xi|^{2s}+sc_s\ell^{-2s}|\tau|^{2s}+sc_s\ell^{-2}|\xi|^{2(s-1)}|\tau|^2\qquad \text{for a.e. $\xi\in\R^n, \tau\in\R^k$.}
$$
Taking also 
(\ref{eq:max}) and (\ref{eq:start}) into account, we can therefore estimate
$$
\begin{aligned}
\lambda_1^s(\omega^n\times\ell B^k)&<\lambda_1^s(\omega^n)+sc_s\ell^{-2s}\lambda_1^s(B^k)
+sc_s\ell^{-2}\int\limits_{\R^n} |\xi|^{2(s-1)}|\mathcal F_n[\f_1]|^2\,d\xi\int\limits_{\R^k}|\tau|^2|\cF_k[\psi_1]|^2\,d\tau
\\
&=\lambda_1^s(\omega^n)+sc_s\ell^{-2s}\lambda_1^s(B^k)
+sc_s\ell^{-2}\Big(\irn|\left(-\Delta_{n}\right)^{\frac{s-1}{2}}\f_1|^2dx\Big)\Big(\intl_{\R^k}|\nabla_{\!t}\psi_1|^2dx\Big)\,.
\end{aligned}
$$
It remains to prove the inequality in (\ref{eq:gamma}). Let $s'$ be the conjugate of $s$. We use the H\"older inequality twice, (\ref{eq:fi1}) and \eqref{eq:psij1} to get 
$$
\begin{aligned}
	\int\limits_{\R^n} |\xi|^{2(s-1)}|\mathcal F_n[\f_1]|^2\,d\xi&< \Big(\int\limits_{\R^n} |\xi|^{2s}|\mathcal F_n[\f_1]|^2\,d\xi\Big)^\frac{1}{s'}
	\Big(\int\limits_{\R^n} |\mathcal F_n[\f_1]|^2\,d\xi\Big)^\frac{1}{s}=\lambda_1^s(\omega^n)^\frac{1}{s'}\,,
	\\
	\int\limits_{\R^k}|\tau|^2|\cF_k[\psi_1]|^2\,d\tau&<
	\Big(\int\limits_{\R^k}|\tau|^{2s}|\cF_k[\psi_1]|^2\,d\tau\Big)^\frac{1}{s}
	\Big(\int\limits_{\R^k}|\cF_k[\psi_1]|^2\,d\tau\Big)^\frac{1}{s'}= \lambda_1^s(B^k)^\frac{1}{s}\,,
\end{aligned}
$$
which completes the proof.
\QED

\section{The space ${{\Hdiesis(\omega^n\times B^k)}}$}
\label{S:space_diesis}

Thanks to Lemma \ref{L:poincare}, we can introduce the space
	$\Hdiesis(\omega^n\times B^k)$ obtained by completing $\mathcal C^\infty_c(\omega^n\times B^k)$ with respect to the norm in (\ref{eq:norm_diesis}).
	It turns out that $\tHs(\omega^n\times B^k)$ is a dense subspace of $\Hdiesis(\omega^n\times B^k)$, and that $\Hdiesis(\omega^n\times B^k)\hookrightarrow L^2(\omega^n\times B^k)$ with continuous embedding. In fact, we have 
	$$
			\intl_{B^k}dt\irn |\Dsnhalf u|^2~\!dx\ge {\lambda^s_1}(\omega^n)\iintl_{\omega^n\times B^k}|u|^2 dxdt\quad \text{for any }\,u\in \Hdiesis(\omega^n\times B^k)\,.
$$	
		The embedding $\Hdiesis(\omega^n\times B^k)\hookrightarrow L^2(\omega^n\times B^k)$ is evidently not compact. For instance, let
		$(\eta_h)_h\subset \mathcal C^\infty_c(B^k)$ be a sequence such that $\|\eta_h\|_{L^2(B^k)}=1$ and $\eta_h\to 0$ weakly in $L^2(B^k)$; then take 
		any $v\in \mathcal C^\infty_c(\omega^n)\setminus\{0\}$. 
		Then the sequence $(\eta_hv)_h$ is bounded in $\Hdiesis(\omega^n\times B^k)$ because
		$$
		\|\eta_hv\|_\#^2=\int\limits_{B^k}\eta_h^2\,dt\,\irn|\Dsnhalf v|^2dx=\irn|\Dsnhalf v|^2dx\,,
$$
and it converges weakly but not strongly to $0$ in $L^2(\omega^n\times B^k)$.

\medskip

By standard arguments, one can prove that $\Hdiesis(\omega^n\times B^k)$ is a Hilbert space with scalar product
			$$
			(v,w)_\# = \intl_{B^k}dt\intl_{\R^n}(\Dsnhalf v)(\Dsnhalf w)~\!dx~\!.
			$$
The differential of the quadratic form 
$$
	v\mapsto \frac12\intl_{B^k}dt\irn |\Dsnhalf v|^2dx=\frac12\intl_{B^k}dt\irn|\xi|^{2s}|\cF_n[v(\cdot,t)]|^2d\xi\,,\quad v\in\Hdiesis(\omega^n\times B^k)
	$$
	is denoted by $\Dsn: \Hdiesis(\omega^n\times B^k)\to \Hdiesis(\omega^n\times B^k)'$, compare with (\ref{eq:Dsn_diesis}).

	We now provide a more concrete description of the space $\Hdiesis(\omega^n\times B^k)$.

	\begin{Lemma}\label{embeddings}
		Let $u\in L^2(\R^n\times B^k)\subset  L^2(\R^{n+k})$ (compare with (\ref{eq:null})). Then $u\in\Hdiesis(\omega^n\times B^k)$ if and only if
			\begin{equation}\label{eq:space}
				u(\cdot,t)\in \tHs(\omega^n)~~\text{for a.e. $t\in B^k$,\quad and} \quad\intl_{B^k}dt \intl_{\R^n} |\Dsnhalf u|^2~\!dx<\infty\,.
			\end{equation}
	\end{Lemma}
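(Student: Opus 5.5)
We prove the two implications separately. Throughout, write $\|u\|_\#^2$ for the quantity in (\ref{eq:norm_diesis}); it makes sense for every $u\in L^2(\R^n\times B^k)$, possibly being $+\infty$.

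\emph{Necessity.} Let $u\in\Hdiesis(\omega^n\times B^k)$ and take $u_h\in\mathcal C^\infty_c(\omega^n\times B^k)$ with $u_h\to u$ in $\Hdiesis$; by the continuous embedding into $L^2$, also $u_h\to u$ in $L^2(\omega^n\times B^k)$. For the full sequence, the maps $t\mapsto u_h(\cdot,t)$ are Cauchy in $L^2(B^k;\tHs(\omega^n))$. Here we use on $\tHs(\omega^n)$ the norm $\|\Dsnhalf\cdot\|_{L^2(\R^n)}$, which is equivalent to the $H^s$ norm by the Poincar\'e inequality $\lambda_1^s(\omega^n)$. Since $L^2(B^k;\tHs(\omega^n))$ is a Bochner space over a Hilbert space, it is complete, so $u_h\to U$ in this space for some $U$. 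Passing to a subsequence, $u_h(\cdot,t)\to U(t)$ in $\tHs(\omega^n)$ for a.e.\ $t$. Along a further subsequence, $u_h(\cdot,t)\to u(\cdot,t)$ in $L^2(\omega^n)$ for a.e.\ $t$, by Fubini applied to the $L^2$ convergence. Hence $U(t)=u(\cdot,t)$, so $u(\cdot,t)\in\tHs(\omega^n)$ for a.e.\ $t$. Moreover $\int_{B^k}\|\Dsnhalf u(\cdot,t)\|^2dt=\|U\|^2<\infty$, which is \eqref{eq:space}. This also shows that $\|\cdot\|_\#$ computed pointwise in $t$ agrees with the abstract norm.

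\emph{Sufficiency.} Assume \eqref{eq:space}. Then $t\mapsto u(\cdot,t)$ is a function in $L^2(B^k;\tHs(\omega^n))$. Strong measurability follows from weak measurability by Pettis' theorem: the space is separable, and $t\mapsto\int u(x,t)\varphi(x)\,dx$ is measurable for $\varphi\in\mathcal C^\infty_c(\omega^n)$, a family that separates points in the dual via the $L^2$ pairing. It therefore suffices to approximate $u$ in $L^2(B^k;\tHs(\omega^n))$ by elements of $\mathcal C^\infty_c(\omega^n\times B^k)$. We do this in three steps.

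\textbf{Step 1.} Simple functions $\sum_i \chi_{E_i}(t)v_i$, with $E_i\subset B^k$ measurable and $v_i\in\tHs(\omega^n)$, are dense in the Bochner space. Replacing each $v_i$ by some $\phi_i\in\mathcal C^\infty_c(\omega^n)$ close in $H^s$, we reduce to functions $\sum_i\chi_{E_i}(t)\phi_i(x)$.

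\textbf{Step 2.} Approximate each $\chi_{E_i}$ in $L^2(B^k)$ by some $\eta_i\in\mathcal C^\infty_c(B^k)$. The resulting error is bounded by
$$
\Big\|\sum_i(\chi_{E_i}-\eta_i)\phi_i\Big\|_\#\le\sum_i\|\chi_{E_i}-\eta_i\|_{L^2(B^k)}\,\|\Dsnhalf\phi_i\|_{L^2(\R^n)}.
$$
The products $\eta_i(t)\phi_i(x)$ belong to $\mathcal C^\infty_c(\omega^n\times B^k)$.

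\textbf{Step 3.} Combining the two steps, we have approximants $u_h\in\mathcal C^\infty_c(\omega^n\times B^k)$ with $\|u_h-u\|_\#\to0$. So $(u_h)$ is Cauchy in $\Hdiesis$, and its limit there is $u$ by the identification made in the necessity part, which uses the $L^2$ convergence.

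The main obstacle is the measurability issue in the sufficiency part, that is, showing that the pointwise slices define a genuine Bochner function. The rest is a routine density argument. The key structural point is that $\|\cdot\|_\#$ is exactly the norm of the Bochner space $L^2(B^k;\tHs(\omega^n))$, which lets us identify $\Hdiesis(\omega^n\times B^k)$ with that space.
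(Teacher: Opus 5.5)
Your proof is correct, but it follows a different route from the paper.

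\emph{The paper's route.} For sufficiency the paper works with functions directly. It cuts $u$ off in $t$ with $\eta_\nu\in\mathcal C^\infty_c(B^k)$ and mollifies in $t$ only. Fourier-side estimates show that the regularized function has finite split energy $\widetilde\E^s_1$, and Lemma~\ref{L:poincare} is invoked to place it in $\tHs(\omega^n\times B^k)$. Finally it is approximated by $\mathcal C^\infty_c(\omega^n\times B^k)$ functions in the full $H^s(\R^{n+k})$ norm, which dominates $\|\cdot\|_\#$ by \eqref{eq:first}. Necessity is declared immediate.

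\emph{Your route.} You identify $\Hdiesis(\omega^n\times B^k)$ isometrically with the Bochner space $L^2(B^k;\tHs(\omega^n))$. You then approximate through simple functions and then through sums of products $\eta_i(t)\phi_i(x)$, whose $\#$-error factorizes exactly. This frees you from the energy comparison \eqref{eq:second_bis}. More importantly, it frees you from having to certify that a $t$-mollified function lies in $\tHs(\omega^n\times B^k)$. That is the delicate point of the paper's argument, since Lemma~\ref{L:poincare} is formulated for functions already known to be in that space. Your approach also gives a genuine proof of necessity, including the agreement of the abstract norm with the slicewise one. The paper's route, in turn, avoids vector-valued integration and uses only tools already set up there.

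\emph{One small point in your measurability step.} Measurability of $t\mapsto\int u(x,t)\varphi(x)\,dx$ for $\varphi$ in a separating family is not literally the hypothesis of Pettis' theorem. It does suffice here, because $\tHs(\omega^n)$ is reflexive. Hence a separating linear subspace of its dual is norm dense, and every continuous functional is a norm limit of such $L^2$-pairings. Weak measurability then follows by taking pointwise limits.
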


	\proof
		The "only if" part is immediate from the definition of $\Hdiesis(\omega^n\times B^k)$. 
		
		Conversely, let $u\in L^2(\R^n\times B^k)$ satisfy (\ref{eq:space}) and  let  $\eps>0$ be arbitrarily chosen.
		Recall that $u$ is extended by the null function outside $\R^n\times B^k$ by (\ref{eq:null}).
		Take a sequence $\eta_\nu=\eta_\nu(t)\in \mathcal C^\infty_c(B^k)$ such that $0\le \eta_\nu\le 1$ and such that $\eta_\nu\to 1$ a.e. on $B^k$ 
		as $\nu\to\infty$. 
		By Lebesgue's convergence theorem, there exists $\nu_\eps>0$ large enough, such that 
		\begin{gather}
			\nonumber
			\intl_{B^k}dt \int\limits_{\omega^n}|u-\eta_{\nu_\eps}u|^2~\!dx=\intl_{B^k}dt\int\limits_{\omega^n}|1-\eta_{\nu_\eps}|^2|u|^2~\!dx<\eps\,,\\
			\label{eq:A}
			\intl_{B^k}dt \intl_{\R^n}|\Dsnhalf(u-\eta_{\nu_\eps}u)|^2dx = \intl_{B^k}dt \intl_{\R^n}|1-\eta_{\nu_\eps}|^2|\Dsnhalf u|^2dx<\eps~\!.
		\end{gather}
		Next, we take a sequence $\rho_h=\rho_h(t)$ of mollifiers in $\mathcal C^\infty_c(B^k)$ and we denote by $*_k$ the convolution in $B^k$. Explicitly, we have
		$$
		\big( (\eta_{\nu_\eps}u)*_k\rho_h\big)(x,t)=\int\limits_{B^k}\eta_{\nu_\eps}(\tau)u(x,\tau)\rho_h(t-\tau)~\!d\tau\,.
		$$
		The function $(x,t)\mapsto \big( (\eta_{\nu_\eps}u)*_k\rho_h\big)(x,t)$ belongs to $L^2(\R^{n+k})$, is smooth in the $t$-variable
		for a.e. $x\in\R^n$ and, 
		for $h$ sufficiently large, has compact support in $\overline{\omega^n}\times B^k$. 
		We now use Fubini's theorem and convolution estimates to get
		\begin{equation}
			\label{eq:dense1}
			\begin{aligned}
				\intl_{B^k}dt \intl_{\R^n}\big|\Dsnhalf&\big((\eta_{\nu_\eps}u)*_k\rho_h\big)|^2dx
				=
				\intl_{\R^n}|\xi|^{2s}d\xi\intl_{B^k} \big|\cF_n[(\eta_{\nu_\eps}u)*_k\rho_h]\big|^2~\!dt\\ 
				&\le  
				\intl_{B^k}dt \intl_{\R^{n}}|\xi|^{2s}\big|\cF_n[\eta_{\nu_\eps}u]\big|^2d\xi
				= \intl_{B^k}dt\intl_{\R^{n}}|\Dsnhalf(\eta_{\nu_\eps}u)|^2dx\,,
			\end{aligned}
		\end{equation}
		from which we infer that the sequence $((\eta_{\nu_\eps}u)*_k\rho_h)_h$ is bounded in $\Hdiesis(\omega^n\times B^k)$. Since in addition $(\eta_{\nu_\eps}u)*_k\rho_h \to \eta_{\nu_\eps}u$ in $L^2(\R^n\times B^k)$ as $h\to \infty$, from (\ref{eq:dense1}) it indeed follows that
		$$
		(\eta_{\nu_\eps}u)*_k\rho_h \to \eta_{\nu_\eps}u\qquad \text{in $\Hdiesis(\omega^n\times B^k)$\,,}
		$$
		use the uniform convexity of $\Hdiesis(\omega^n\times B^k)$. Thus we can find a large integer $h_\eps$ such that
		\begin{equation}
			\label{eq:B}
			\intl_{B^k}dt \intl_{\R^n}\big|\Dsnhalf\big((\eta_{\nu_\eps}u)*_k\rho_{h_\eps}- \eta_{\nu_\eps}u)\big|^2dx<\eps\,.
		\end{equation}
		Next, we estimate
		$$
		\begin{aligned}
			&\intl_{\omega^n}dx \intl_{\R^k}\big|\DshalfUno\big((\eta_{\nu_\eps}u)*_k\rho_{h_\eps}\big)\big|^2dt
			=\intl_{\omega^n}dx\intl_{\R^k}|\tau|^{2s}\big|\cF_k[(\eta_{\nu_\eps}u)*_k\rho_{h_\eps}]\big|^2d\tau
			\\
			&\quad=(2\pi)^\frac k2\! \intl_{\omega^n}dx\intl_{\R^k}\big|\cF_k[\eta_{\nu_\eps}u]\big|^2\big||\tau|^s\cF_k[\rho_{h_\eps}]\big|^2d\tau
			\le (2\pi)^\frac k2\big\||\cdot|^s\cF_k[\rho_{h_\eps}]\big\|_{L^\infty(\R^k)}^2\!\iintl_{\omega^n\times B^k}\!|u|^2 dxdt\,.
		\end{aligned}
		$$
		The constant appearing in the last term is finite, as $\rho_{h_\eps}\in \mathcal C^\infty_c(\R^k)$. 
		Therefore, thanks to (\ref{eq:B}), \eqref{eq:dense1}, and to the first inequality in (\ref{eq:second_bis}), see Lemma \ref{L:poincare}, we can conclude that 
		$(\eta_{\nu_\eps}u)*_k\rho_{h_\eps}\in \tHs(\omega^n\times B^k)$. Thus there exists $u_\eps\in \mathcal C^\infty_c(\omega^n\times B^k)$ such that
		$$
		\iintl_{\R^{n+k}}\big|\Dsnphalf\big((\eta_{\nu_\eps}u)*_k\rho_{h_\eps}- u_\eps)|^2dxdt<\eps\,,
		$$
		which implies 
		\begin{equation}
			\label{eq:C}
			\intl_{B^k}dt \intl_{\R^n}\big|\Dsnhalf\big((\eta_{\nu_\eps}u)*_k\rho_{h_\eps}- u_\eps)|^2dx<\eps\,,
		\end{equation}
		by (\ref{eq:first}) in Lemma \ref{L:poincare}. To conclude the proof, write
		$$
		u-u_\eps=\big(u-\eta_{\nu_\eps}u\big)+\big(\eta_{\nu_\eps}u-(\eta_{\nu_\eps}u)*_k\rho_{h_\eps}\big)+\big((\eta_{\nu_\eps}u)*_k\rho_{h_\eps}-u_\eps\big),
		$$
		then use the triangle inequality together with \eqref{eq:A}, \eqref{eq:B}, and \eqref{eq:C}.
	\QED
	
\begin{Remark}
Let $u=u(x)\in \tHs(\omega^n)$. We can regard at $u$ as a function on $\R^n\times B^k$ which is constant in the $t$-variable,
so that $u\in L^2(\R^n\times B^k)$. 
	With this agreement and thanks to Lemma \ref{embeddings}, we have that the space
	$\tHs(\omega^{n})$ is isometrically embedded into $\Hdiesis(\omega^n\times B^k)$.
	\end{Remark}

\section{$ G$-convergence and $\Gamma$-convergence}
\label{S:GGamma}

	\paragraph{Proof of Theorem \ref{Pb_lin_2}.} 
We start with a preliminary remark. 
Since $\mathcal F_{n+k}=\mathcal F_k\circ\mathcal F_n$, Plancharel's theorem gives
$$
\begin{aligned}
\intl_{B^k}dt\irn |\xi|^{2s}|\mathcal F_n[v_\ell]|^2d\xi&=\intl_{\R^k}d\tau\irn |\xi|^{2s}|\mathcal F_{n+k}[v_\ell]|^2d\xi
\\ &
\le \intl_{\R^k}d\tau\irn (|\xi|^{2}+\ell^{-2}|\tau|^2)^s|\mathcal F_{n+k}[v_\ell]|^2d\xi\,.
\end{aligned}
$$
Thus, from (\ref{eq:Lsell0}) and (\ref{eq:Dirichlet_ell}) we get
\begin{equation}
\label{eq:rk}
\intl_{B^k}dt\irn |\Dsnhalf v_\ell|^2~\!dx\le
			\langle\mathcal L^s_\ell v_\ell,v_\ell\rangle= \iint\limits_{\omega^n\times B^k} g_\ell v_\ell~\! dxdt~\!,
\end{equation}
and therefore, using also (\ref{eq:first}), we can estimate
		$$
		\begin{aligned}
			\intl_{B^k}dt\irn |\Dsnhalf v_\ell|^2~\!dx&
			\le\Big(\iint\limits_{\omega^n\times B^k}|g_\ell|^2 dxdt\Big)^\frac12
			\Big(\intl_{B^{k}_1}dt \intl_{\omega^n} |v_\ell|^2~\!dx\Big)^\frac12
			\\&
			\le \frac{1}{\sqrt{\lambda^s_1(\omega^{n})}}\Big(\iint\limits_{\omega^n\times B^k}|g_\ell|^2 dxdt\Big)^\frac12
			\Big(\intl_{B^{k}_1}dt \intl_{\R^n} |\Dsnhalf v_\ell|^2~\!dxdt\Big)^\frac12.
		\end{aligned}
		$$
		This implies that the sequence $(v_\ell)_\ell$ is bounded in $\Hdiesis(\omega^n\times B^k)$. Thus we can find $$\hat v\in \Hdiesis(\omega^n\times B^k)$$ and a subsequence $\ell_h\to\infty$
		such that
		$$
		v_{\ell_h}\to \hat v\qquad\text{weakly in }\,\Hdiesis(\omega^n\times B^k)\ \,\mbox{and in }\,L^2(\omega^n\times B^k)\,.
		$$ 
		Now we show that $\hat v$ coincides with the unique solution to (\ref{eq:Dirichlet_omega}). 
		We take any test function $\f\in \mathcal C^\infty_c(\omega^n\times B^k)$ and set
		$$
		\Phi_h(\xi,\tau):=(|\xi|^2+{\ell_h}^{\!-2}|\tau|^2)^s\cF_{n+k}[\f](\xi,\tau)\,.
		$$
		Then $\Phi_h(\xi,\tau) \to |\xi|^{2s}\mathcal F_{n+k}[\f](\xi,\tau)$ almost everywhere on $\R^{n+k}$ and 
		$$
		|\Phi_h(\xi,\tau)|\le \big(|\xi|^2+|\tau|^2\big)^{\!s}|\cF_{n+k}[\f](\xi,\tau)|\in L^2(\R^{n+k})\,,
		$$
		because
		$$
		\iint\limits_{\R^{n+k}}(|\xi|^2+|\tau|^2)^{2s}|\cF_{n+k}[\f]|^2d\xi d\tau=\int\limits_{\R^{n+k}}|\zeta|^{4s}|\cF_{n+k}[\f]|^2~\!d\zeta
		=\int\limits_{\R^{n+k}}|(-\Delta_{n+k})^s\f|^2~\!dz\,.
		$$
		Thus Lebesgue's theorem gives
		$$
		\Phi_h(\xi,\tau)\to|\xi|^{2s}\cF_{n+k}[\f]\quad \text{in}\ \,L^2(\R^{n+k})\,.
		$$
		Since $v_{\ell_h}\to \hat v$ weakly in $L^2(\omega^n\times B^k)$, then $\cF_{n+k}[v_{\ell_h}]\to \cF_{n+k}[\hat v]$ weakly in $L^2(\R^{n+k})$. 
		Therefore, using also Plancharel's theorem, we have
		$$
		\begin{aligned}
			\langle \mathcal L^s_{\ell_h}v_{\ell_h},\f\rangle &=\iintl_{\R^{n+k}}\cF_{n+k}[v_{\ell_h}]~\!\overline{\Phi_h}~d\xi d\tau=
			\iintl_{\R^{n+k}}\!|\xi|^{2s}\cF_{n+k}[\hat v]~\!\overline{\cF_{n+k}[\f]}\,d\xi d\tau +o(1)\\
			&=\iintl_{\R^{n+k}}\!|\xi|^{2s}\cF_{n}[\hat v]~\!\overline{\cF_n[\f]}\,d\xi d\tau +o(1)=\intl_{B^k}dt\irn(\Dsnhalf \hat v)(\Dsnhalf \f)~\!dx+o(1)\,.
		\end{aligned}
		$$
		On the other hand, since $v_{\ell_h}$ solves \eqref{eq:Dirichlet_ell} and $g_{\ell_h}\to g$ in $L^2(\omega^n\times B^k)$, we have that
		$$
		\langle \mathcal L^s_{\ell_h}v_{\ell_h},\f\rangle=\iintl_{\omega^n\times B^k}g_{\ell_h} \f~\!dxdt=\iintl_{\omega^n\times B^k}g \f~\!dxdt+o(1)\,.
		$$
		In conclusion,  we have  that 
		$$
		\intl_{B^k}dt\irn (\Dsnhalf \hat v)(\Dsnhalf \f)~\!dx= \iintl_{\omega^n\times B^k}g\f~\!dxdt\quad \text{for any $\f\in \mathcal C^\infty_c(\omega^n\times B^k)$\,,}
		$$
		that is, $\hat v=v$, where $v$ is the unique solution to \eqref{eq:Dirichlet_omega}. Now we use again (\ref{eq:rk}) to get
		$$
		\begin{aligned}
			\intl_{B^k}dt\irn |\Dsnhalf v_{\ell_h}|^2~\!dx&
			\le \iint\limits_{\omega^n\times B^k}g_{\ell_h} v_{\ell_h}~\!dxdt
			\\&
			=\iint\limits_{\omega^n\times B^k}gv~\!dxdt+o(1)=\intl_{B^k}dt\irn |\Dsnhalf v|^2~\!dx
		\end{aligned}
		$$
		by \eqref{eq:Dirichlet_omega}. 
		This is sufficient to infer that $v_{\ell_h}\to v$ in the uniformly convex space $\Hdiesis(\omega^n\times B^k)$.
		In fact, due to the uniqueness of the limit of all (weakly) convergent subsequences $v_{\ell_h}$, we can conclude that
		the full sequence $v_\ell$ converges to $v$ in  $\Hdiesis(\omega^n\times B^k)$, as $\ell\to\infty$.
\QED
	
\paragraph{Proof of Theorem \ref{Pb_lin_new}}
		Let $f_\ell$, $u_\ell$ and $f_\infty=f_\infty(x)$ be as in the statement. We put
		$$
		\invbreve f_\ell(x,t):=f_\ell(x,\ell t)\,,\qquad\invbreve u_\ell(x,t):=u(x,\ell t)\,.
		$$
		It is easy to show that $\invbreve f_\ell\in L^2(\omega^n\times B^k)$ and that $\invbreve u_\ell\in \tHs(\omega^n\times B^k)$ is a  solution to
		$$
		\mathcal L^s_\ell \invbreve u_\ell= \invbreve f_\ell\qquad \text{in $\tHs(\omega^n\times B^k)'$.}
		$$
		Since $f_\ell\to f_\infty$ in $L^2-$mean, then $\invbreve f_\ell\to f_\infty$ in $L^2(\omega^n\times B^k)$. Therefore, Theorem \ref{Pb_lin_2} implies that $\invbreve u_\ell\to v$ in $\Hdiesis(\omega^n\times B^k)$, where
		$v\in \Hdiesis(\omega^n\times B^k)$ is the unique  solution to 
		\begin{equation}\label{eq:almost}
			\Dsn v= f_\infty\qquad \text{in $\Hdiesis(\omega^n\times B^k)'$}
		\end{equation}
		(recall that $f_\infty$ does not depend on $t$).
		
		To prove that $v(x,t)=u_\infty(x)$ on $\omega^n\times B^k$, let $\f\in \tHs(\omega^n)$ and $\psi\in L^2(B^k)$. 
		Testing (\ref{eq:almost}) with $\f\psi\in\Hdiesis(\omega^n\times B^k)$, we get 
		$$
		\intl_{B^k}\psi(t)\,dt\irn(\Dsnhalf v)(\Dsnhalf\f)\,dx=\intl_{B^k}\psi(t)\,dt\irn f_\infty\f\,dx\,.
		$$
		Since the choice of  $\psi\in L^2(B^k)$ was arbitrary, we see that for a.e. $t\in B^k$ it holds that
		$$
		\irn(\Dsnhalf v)(\Dsnhalf \f)\,dx=\int\limits_{\omega^n} f_\infty \f\,dx\quad\ \text{for any }\,\f\in \tHs(\omega^n)\,.
		$$
		Hence, for a.e. $t\in B^k$ the function $v(\cdot,t)$ 
		coincides with the unique solution $u_\infty\in\tHs(\omega^n)$ to \eqref{eq:Dirichlet_omega_Thm}. 
		
		Finally, recall that  $\invbreve u_\ell\to u_\infty$ in $\Hdiesis(\omega^n\times B^k)$ means
		$$
			\intl_{B^k}dt\irn\big|\Dsnhalf \invbreve u_{\ell}-\Dsnhalf v_\infty\big|^2 dx \to 0\,.
		$$
		Rescaling back, one can plainly prove that $\Dsnhalf u_{\ell}\to \Dsnhalf u_\infty$ in $L^2-$mean. Theorem \ref{Pb_lin_new} is completely proved. 
\QED

\begin{Remark}\label{R:AFM1}
Let $u_\ell\in \tHs(\omega^n\times\ell B^k)$, $u_\infty\in \tHs(\omega^n)$ be as in the statement of Theorem \ref{Pb_lin_new}. 
Then
$$
\rho_\ell u_\ell:=\fint\limits_{\ell B^k} u_\ell(\cdot,t)~\!dt\to u_\infty \qquad \text{in $\tHs(\omega^n)$\,,}
$$
compare with \cite[Theorem 1.2]{AFM}, where this conclusion is proved in case $s\in(0,1)$ and $B^k$ convex. In fact, notice that 
$$
	\begin{aligned}
	\irn\big|\Dsnhalf&(\rho_\ell u_\ell- u_\infty)|^2~\!dx=
	\irn|\xi|^{2s}\big|\cF_n[\rho_\ell u_\ell- u_\infty]\big|^2d\xi\\
	&\le\irn|\xi|^{2s}d\xi \fint\limits_{\ell B^k}|\cF_n[u_\ell- u_\infty]|^2~\!dt=\fint\limits_{\ell B^k}dt\irn\big|(-\Delta_n)^{\frac s2}(u_\ell- u_\infty)|^2dx
	\to 0\,,
	\end{aligned}
	$$
by the H\"older inequality, and thanks to (\ref{eq:tesi}). 

\end{Remark}

\appendix

\section{\hskip-0.56cmppendix}

Here we point out a few improvements of our results, as well as some remarks connecting our asymptotic analysis with the available literature.

\subsection{The infinite tube}
	It holds that 
	\begin{equation}
		\label{eq:equal}
		{\lambda^s_1}(\omega^n\times \R^k):=\inf_{u\in \tHs(\omega^n\times \R^k)\atop u\neq 0}\frac{\displaystyle\iintl_{\R^{n+k}}|\Dsnhalf u|^2~\!dxdt}
		{\displaystyle\iintl_{\omega^n\times \R^k}| u|^2~\!dxdt}
		={\lambda^s_1}(\omega^n)\,.
	\end{equation}
	In fact, since $\mathcal C^\infty_c(\omega^n\times \R^k)$ is dense in $H^s_0(\omega^n\times \R^k)$, then
	\eqref{eq:first} gives ${\lambda^s_1}(\omega^n\times \R^k)\ge {\lambda^s_1}(\omega^n)$. 
	On the other hand, for any $\ell>0$ we have 
	${\lambda^s_1}(\omega^n\times \R^k)\le {\lambda^s_1}(\omega^n\times\ell B^k)$ because
	$\tHs(\omega^n\times\ell B^k)\hookrightarrow \tHs(\omega^n\times \R^k)$. Thus (\ref{eq:equal}) 
	follows by taking the limit as $\ell\to \infty$, thanks to Theorems \ref{T:eigenvalue1} and \ref{T:eigenvalue2}.
	
	The equality (\ref{eq:equal}) was proved in \cite{AFM} in case $s\in(0,1)$ via the maximum principle; see also \cite{BN} where $s\in(0,1)$, $k=1$
	are assumed and a different argument is used.

\subsection{Improvements of Theorem \ref{T:eigenvalue2}}
Throughout this subsection, we assume that 
$s>1$.

\subsubsection{Estimates on higher order eigenvalues.}
\label{SSS:lambda_m}
Assume that $\lambda^s_1(\omega^n)$ is simple, which happens for instance in case $n=1$ and $\omega^n$ is an interval, see \cite{AJS,DG}. Since $s>1$, by (\ref{eq:second_bis}) we have that
$$
\widetilde\E^s_{\ell}(u)\le \iint\limits_{\R^{n+k}} |\Dsnphalf u|^2~\! dxdt  \quad\ \, \text{for any $u\in\tHs(\omega^n\times\ell B^k)$\,.}
$$
Therefore, using the Courant–Fischer–Weyl variational principle and the same argument as in the proof of 
Theorem \ref{T:eigenvalue1} we get that
$\lambda^s_m(\omega^n\times\ell B^k)$ is not smaller than $\tlambda(\omega^n\times\ell B^k)$, which is the 
$m$-th eigenvalue of the operator $\Dsn\oplus\DsUno$ on $\tHs(\omega^n\times\ell B^k)$. 
Since $\lambda_1^s(\omega^n)$ is simple, then $\tlambda(\omega^n\times\ell B^k)$ can be computed
for $\ell$ large enough, leading to the inequality 
	\begin{equation*}
		\lambda^s_1(\omega^n)+\frac{\lambda^s_m(B^k)}{\ell^{2s}}<\lambda_m^s(\omega^n\times\ell B^k)\,,
	\end{equation*}
	which holds provided that $\ell>0$ is large enough, depending on $m$.

\subsubsection{Estimates for ${s>>1}$ large.}
In view of (\ref{eq:s=1}), one can not expects to obtain significant improvements of (\ref{eq:weak}), which include powers $s\searrow 1$. 
On the other hand, the estimates in \eqref{eq:weak} can be progressively refined as the integer part of $s$ increases. 
This can be done via elementary numerical inequalities. For the sake of readability, we restrict our attention to the case
$s\ge 2$.

\begin{Lemma}\label{L:numbers}
	Let $s\ge 2$. For any $\zeta>0$, the following inequalities hold:
	\begin{equation}
	\label{eq:elementary2}
		1+\zeta^s+s\zeta\le (1+\zeta)^s<1+(s-1)c_{s-1}\zeta^s+s\zeta+\frac{s(s-1)}2c_{s-1}\zeta^2,
	\end{equation}
	where $c_{s-1}=\max\{1, 2^{s-3}\}$ (compare with Theorem \ref{T:eigenvalue2}). 
 \end{Lemma}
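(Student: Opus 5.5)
The plan is to treat the two inequalities in \eqref{eq:elementary2} separately, reducing each to a statement about the derivative of a one-variable function of $\zeta$ and to the inequality \eqref{eq:elementary} already proved for exponents larger than one.

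\emph{Lower bound.} Set $f(\zeta)=(1+\zeta)^s-1-\zeta^s-s\zeta$, so that $f(0)=0$. Its derivative is
$$
f'(\zeta)=s\big[(1+\zeta)^{s-1}-\zeta^{s-1}-1\big].
$$
Since $s-1\ge 1$, the function $t\mapsto t^{s-1}$ is superadditive on $[0,\infty)$. This holds because $t^{s-1}$ is convex and vanishes at $0$, so $a^{s-1}+b^{s-1}\le (a+b)^{s-1}$. Taking $a=1$, $b=\zeta$ gives $f'\ge 0$, hence $f\ge 0$, which is the left inequality. Equality occurs only when $s=2$, which explains the non-strict sign.

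\emph{Upper bound.} Set
$$
g(\zeta)=1+(s-1)c_{s-1}\zeta^s+s\zeta+\tfrac{s(s-1)}2c_{s-1}\zeta^2-(1+\zeta)^s,
$$
with $g(0)=0$. Differentiating gives
$$
g'(\zeta)=s\Big[(s-1)c_{s-1}\tfrac{s-1}{s}\cdot\tfrac{s}{s-1}\,\zeta^{s-1}\cdots\Big],
$$
which is clearer written directly as
$$
g'(\zeta)=s(s-1)c_{s-1}\zeta^{s-1}+s+s(s-1)c_{s-1}\zeta-s(1+\zeta)^{s-1}.
$$
Therefore $g'>0$ is equivalent to
$$
(1+\zeta)^{s-1}<1+(s-1)c_{s-1}\big(\zeta+\zeta^{s-1}\big).
$$
This is exactly \eqref{eq:elementary} with $s$ replaced by $s-1$, where $c_{s-1}=\max\{1,2^{(s-1)-2}\}$. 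So I would invoke \eqref{eq:elementary} for the exponent $s-1$, but only when $s-1>1$.

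\emph{Boundary case $s=2$.} Here \eqref{eq:elementary} is not available for exponent $1$, and that case is checked by hand. We have $c_1=1$, and the right-hand side becomes $1+\zeta^2+2\zeta+\zeta^2$, which exceeds $(1+\zeta)^2$ by $\zeta^2>0$.

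\emph{Re-checking \eqref{eq:elementary}.} The main point to verify is that \eqref{eq:elementary} really holds for every exponent $p=s-1>1$ with the constant $c_p=\max\{1,2^{p-2}\}$. I would re-derive it from the mean value bound $(1+\zeta)^p<1+p(1+\zeta)^{p-1}\zeta$ combined with $(1+\zeta)^{p-1}\le c_p(1+\zeta^{p-1})$. The latter is the standard estimate $(a+b)^q\le\max\{1,2^{q-1}\}(a^q+b^q)$ with $q=p-1$. Then $g'>0$ on $(0,\infty)$, and integrating from $0$ gives $g>0$, which is the strict right inequality.
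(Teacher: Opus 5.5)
Your proof is correct. For the upper bound it is the paper's argument in differentiated form. The paper writes $(1+\zeta)^s=1+s\int_0^\zeta(1+y)^{s-1}\,dy$ and inserts \eqref{eq:elementary} with exponent $s-1$ under the integral. That is exactly your reduction of $g'>0$ to $(1+\zeta)^{s-1}<1+(s-1)c_{s-1}(\zeta+\zeta^{s-1})$. The paper only says ``we can assume $s>2$'', whereas you check the case $s=2$ explicitly.

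For the lower bound the routes differ. The paper works with the quotient $q(\zeta)=\zeta^{-1}\big((1+\zeta)^s-1-\zeta^s\big)$. It computes $\zeta^2q'(\zeta)=(1+\zeta)^{s-1}(s\zeta-\zeta-1)+1-(s-1)\zeta^s$, differentiates once more to get $s(s-1)\zeta\big((1+\zeta)^{s-2}-\zeta^{s-2}\big)>0$, and concludes that $q$ is increasing. It then uses $q(0^+)=s$. Your argument differentiates the difference once and applies superadditivity of $t\mapsto t^{s-1}$. It is shorter, needs no limit at $0$, and still gives strictness for $s>2$, since superadditivity is strict when $s-1>1$. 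The paper's version proves the slightly stronger monotonicity of $q$, which is not used elsewhere.

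One cosmetic point: delete your first, garbled display for $g'$, which the correct formula immediately after it supersedes.
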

 
 \proof
 We can assume that $s>2$.
 Consider the functions $f, F:(0,\infty)\to \R$ given by
 $$
f(\zeta)=\zeta^{-1}((1+\zeta)^s-1-\zeta^{s})~,\quad  F(\zeta):=\zeta^2f'(\zeta)=(1+\zeta)^{s-1}(s\zeta-\zeta-1)+1-(s-1)\zeta^s\,.
 $$ 
 We compute $F'(\zeta)=s(s-1)\zeta\left((1+\zeta)^{s-2}-\zeta^{s-2}\right)$, so that $F$ is increasing. 
 Thus $f$ is increasing on $(0,\infty)$ as well, because $F(0)=0$. Since (in the limit) $f(0)=s$, we infer that 
 $1+\zeta^s+s\zeta<(1+\zeta)^s$.

\medskip
Next, we use (\ref{eq:elementary}) with $s-1$ instead of $s$ to estimate
$$
(1+\zeta)^s=1+s\int_0^\zeta (1+y)^{s-1}dy< 1+s\int_0^\zeta\big[1+(s-1)c_{s-1}\big(y+y^{s-1}\big)]dy\,,
$$
which readily gives the right hand side inequality in (\ref{eq:elementary2}).
\QED

	\begin{Theorem}\label{T:eigenvalue3}
		Let $s\ge 2$. For any $\ell>0$ it holds that
		$$
		\begin{aligned}
		\lambda^s_1(\omega^n)+ \frac{\lambda^s_1(B^k)}{\ell^{2s}} +{s}\frac{\lambda_1^{s-1}(\omega^n)\lambda_1^1(B^k)}{\ell^2}
		&<\lambda^s_1(\omega^n\times\ell B^k)\\
		&<\lambda_1^s(\omega^n)+(s-1)c_{s-1}\frac{\lambda_1^s(B^k)}{\ell^{2s}}+s \frac{\gamma_{s}}{\ell^2}+\frac{s(s-1)}2c_{s-1}\frac{\delta_s}{\ell^4}\,,
		\end{aligned}
		$$
		where $\gamma_s$ is defined in Theorem \ref{T:eigenvalue2}, 
$$
\begin{aligned}
\delta_s&=\irn|\left(-\Delta_{n}\right)^{\frac{s-2}{2}}\!\f_1|^2\,dx\intl_{B^k}|\Delta\psi_1|^2 \,dt
\le \lambda^s_1(\omega^n)^\frac{s-2}s\lambda^s_1(B^k)^\frac2s\,,
\end{aligned}
$$
and $\f_1\in H^s_0(\omega^n)$, $\psi_1\in H^s_0(B^k)$ are as in the proof of Theorem \ref{T:eigenvalue2}.
	\end{Theorem}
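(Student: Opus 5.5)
The plan is to follow the proof of Theorem \ref{T:eigenvalue2} closely, with (\ref{eq:elementary}) replaced by the sharper two-sided inequality (\ref{eq:elementary2}) of Lemma \ref{L:numbers}. In both directions I will use the substitution $\zeta=\ell^{-2}|\tau|^2|\xi|^{-2}$, multiplied through by $|\xi|^{2s}$. This gives, for a.e. $(\xi,\tau)$,
$$
|\xi|^{2s}+\ell^{-2s}|\tau|^{2s}+s\ell^{-2}|\xi|^{2(s-1)}|\tau|^2\le \big(|\xi|^2+\ell^{-2}|\tau|^2\big)^{s},
$$
and the corresponding upper bound
$$
\big(|\xi|^2+\ell^{-2}|\tau|^2\big)^{s}<|\xi|^{2s}+(s-1)c_{s-1}\ell^{-2s}|\tau|^{2s}+s\ell^{-2}|\xi|^{2(s-1)}|\tau|^2+\tfrac{s(s-1)}2c_{s-1}\ell^{-4}|\xi|^{2(s-2)}|\tau|^4 .
$$

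\textbf{Upper bound.} I test (\ref{eq:max}) with $\f_1\psi_1^\ell$ and pass to Fourier variables, writing $\cF_{n+k}[\f_1\psi_1]=\cF_n[\f_1]\cF_k[\psi_1]$. Integrating the upper pointwise bound, the first two terms give $\lambda_1^s(\omega^n)$ and $(s-1)c_{s-1}\ell^{-2s}\lambda_1^s(B^k)$ by (\ref{eq:start}). The third term gives $s\gamma_s\ell^{-2}$, exactly as in the proof of Theorem \ref{T:eigenvalue2}. The fourth term gives
$$
\tfrac{s(s-1)}2c_{s-1}\ell^{-4}\int_{\R^n}|\xi|^{2(s-2)}|\cF_n[\f_1]|^2\,d\xi\int_{\R^k}|\tau|^4|\cF_k[\psi_1]|^2\,d\tau=\tfrac{s(s-1)}2c_{s-1}\ell^{-4}\delta_s .
$$
Here I use $\int_{\R^k}|\tau|^4|\cF_k[\psi_1]|^2\,d\tau=\int|\Delta\psi_1|^2\,dt$, which is finite since $s\ge2$. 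Strictness comes from the strict pointwise inequality on a set of positive measure. For the bound on $\delta_s$ I apply Hölder with exponents $\frac s{s-2}$ and $\frac s2$ in $\xi$, and with $\frac s2$ and $\frac s{s-2}$ in $\tau$, using the normalizations (\ref{eq:fi1}) and (\ref{eq:psij1}).

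\textbf{Lower bound.} Let $\phi_\ell$ be a first eigenfunction normalized in $L^2(\omega^n\times\ell B^k)$. Integrating the lower pointwise inequality against $|\cF_{n+k}[\phi_\ell]|^2$ gives three terms.
\begin{itemize}
\item The first term equals $\int_{\ell B^k}dt\irn|\Dsnhalf\phi_\ell|^2dx$, which is at least $\lambda_1^s(\omega^n)$ by (\ref{eq:first}).
\item The second term equals $\int_{\omega^n}dx\int_{\R^k}|\DshalfUno\phi_\ell|^2dt$, which is at least $\lambda_1^s(\ell B^k)=\ell^{-2s}\lambda_1^s(B^k)$ by Lemma \ref{L:poincare} and rescaling.
\item The mixed term is $s\ell^{-2}\iint|\xi|^{2(s-1)}|\tau|^2|\cF_{n+k}[\phi_\ell]|^2$. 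Undoing the $\ell$-rescaling (I work directly on $\omega^n\times\ell B^k$), it equals $s\iint|(-\Delta_n)^{\frac{s-1}2}\nabla_t\phi_\ell|^2\,dxdt$. For a.e. $t$, each component $\partial_{t_i}\phi_\ell(\cdot,t)$ lies in $H^{s-1}_0(\omega^n)$, so this is at least $s\lambda_1^{s-1}(\omega^n)\iint|\nabla_t\phi_\ell|^2$. For a.e. $x$, $\phi_\ell(x,\cdot)\in H^1_0(\ell B^k)$, so the latter is at least $\ell^{-2}\lambda^1_1(B^k)$.
\end{itemize}
The slices statement follows from Lemma \ref{L:poincare} applied with exponent $s-1$, or by density from smooth compactly supported functions. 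Strictness follows either from (\ref{eq:first}) being strict or from the strict inequality in Lemma \ref{L:numbers} for $s>2$. For $s=2$ I would invoke the strict Poincaré-type step of (\ref{eq:first}).

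The main obstacle is the mixed term in the lower bound. There one has to justify the slicewise memberships $\partial_t\phi_\ell(\cdot,t)\in H^{s-1}_0(\omega^n)$ and $\phi_\ell(x,\cdot)\in H^1_0(\ell B^k)$, and to commute the partial Fourier transforms so that Poincaré inequalities can be applied in $x$ and in $t$ successively. I would handle this by density of $\mathcal C^\infty_c(\omega^n\times\ell B^k)$, since for smooth functions the slices are trivially admissible and both sides are continuous in the $\tHs$ norm.
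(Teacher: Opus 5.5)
Your upper bound, the bounds on the first two terms of the lower bound, and the estimate of $\delta_s$ are exactly the paper's argument. In the mixed term you apply the Poincar\'e inequality in $x$ (to $\partial_{t_i}\phi_\ell(\cdot,t)\in H^{s-1}_0(\omega^n)$) before the one in $t$. The paper does the opposite: it first applies the $t$-Poincar\'e inequality to $\mathcal F_n[\phi](\xi,\cdot)$, weighted by $|\xi|^{2(s-1)}$, and then the $x$-Poincar\'e inequality with exponent $s-1$. Both orders are fine. (A minor slip: on the stretched domain the mixed term carries no factor $\ell^{-2}$; that factor enters only through $\lambda_1^1(\ell B^k)=\ell^{-2}\lambda_1^1(B^k)$.)

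The real gap is strictness of the lower bound when $s=2$. For $s>2$ your argument is valid, and cleaner than the paper's. The proof of Lemma \ref{L:numbers} gives $1+\zeta^s+s\zeta<(1+\zeta)^s$ strictly, and $\mathcal F_{n+k}[\phi_\ell]\neq 0$ on a set of positive measure.

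For $s=2$, however, the left inequality in \eqref{eq:elementary2} is an identity, and you fall back on strictness of the first inequality in \eqref{eq:first}. Although Lemma \ref{L:poincare} is written with a strict sign there, that inequality is not strict in general. For $u=a(t)\varphi_1(x)$ with $0\neq a\in\mathcal C^\infty_c(\ell B^k)$, both sides coincide, so citing it cannot rule out equality for $\phi_\ell$. The paper argues differently: equality in the first two terms would force the slices $\phi_\ell(\cdot,t)$ and $\phi_\ell(x,\cdot)$ to be first eigenfunctions on $\omega^n$ and $\ell B^k$ respectively, which it declares impossible.

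A short fix valid for every $s\ge 2$ uses your own last step.
\begin{itemize}
\item Equality in $\iint|\nabla_t\phi_\ell|^2\ge\lambda_1^1(\ell B^k)\iint|\phi_\ell|^2$ forces, for a.e. $x$, $\phi_\ell(x,\cdot)$ to be either $0$ or a first Dirichlet eigenfunction of $-\Delta_k$ on $\ell B^k$.
\item By Lemma \ref{L:poincare}, $\phi_\ell(x,\cdot)\in H^s_0(\ell B^k)\subset H^2_0(\ell B^k)$.
\item No nonzero $e\in H^2_0(\ell B^k)$ solves $-\Delta_k e=\mu e$ in $\ell B^k$. Approximating $e$ by $\mathcal C^\infty_c(\ell B^k)$ functions shows $\Delta_k e=0$ a.e. outside $\ell B^k$, so the null extension satisfies $-\Delta_k e=\mu e$ a.e. in $\R^k$. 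Hence $(|\tau|^2-\mu)\mathcal F_k[e]=0$ a.e., and so $e=0$.
\end{itemize}
This would give $\phi_\ell=0$, a contradiction, so the last step and hence the whole lower bound is strict.
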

	
	\proof 
	Lemma \ref{L:numbers} implies that
	$$
	(|\xi|^2+|\tau|^2)^s\ge |\xi|^{2s}+|\tau|^{2s}+{s}|\xi|^{2(s-1)}|\tau|^2\qquad \text{for a.e. $\xi\in \R^n,\,\tau\in\R^k$.}
	$$
	Therefore, letting $\phi\in H^{s}_0(\omega^n\times\ell B^k)$ be an eigenfunction for $\lambda_1^s(\omega^n\times\ell B^k)$ with unitary $L^2$-norm, we have that
	$$
	\begin{aligned}
		\lambda_1^s&(\omega^n\times\ell B^k)= \iintl_{\R^{n+k}}|\Dsnphalf \phi|^2d\xi d\tau=\iint\limits_{\R^{n+k}}(|\xi|^2+|\tau|^2)^s|\cF_{n+k}[{\phi}]|^2~\!d\xi d\tau\\
		&\ge \iintl_{\R^{n+k}}|\xi|^{2s}|\cF_{n}[{\phi}]|^2d\xi d\tau
		+\iintl_{\R^{n+k}}|\tau|^{2s}|\cF_{k}[{\phi}]|^2d\xi d\tau+s\intl_{\R^n}|\xi|^{2(s-1)}d\xi\intl_{\R^k} |\tau|^2|\cF_{k}[\mathcal F_n[{\phi}]]|^2d\tau
	\end{aligned}
	$$
	Easily, we can estimate
	$$
	\iintl_{\R^{n+k}}|\xi|^{2s}|\cF_{n}[{\phi}]|^2d\xi d\tau +
	\iintl_{\R^{n+k}}|\tau|^{2s}|\cF_{k}[{\phi}]|^2d\xi d\tau\ge \lambda_1^s(\omega^n)+\ell^{-2s}\lambda_1^s(B^k)\,.
	$$
	In fact, the strict inequality holds. Otherwise, $\phi(\cdot, t)$ would achieve  $\lambda_1^s(\omega^n)$ for a.e. $t\in \ell B^k$ 
	and  
	$\phi(x,\cdot)$ would achieve $\lambda_1^s(\ell B^k)$ for a.e. $x\in\omega^n$, which is impossible.

	Moreover 
	$$
	\begin{aligned}
\intl_{\R^n}|\xi|^{2(s-1)}d\xi&\intl_{\R^k} |\tau|^2|\cF_{k}[\mathcal F_n[{\phi}]]|^2d\tau=
		\intl_{\R^n}|\xi|^{2(s-1)}d\xi\intl_{\ell B^k} |\nabla_{\!t}(\mathcal F_n[{\phi}])|^2dt\\
		&\ge \ell^{-2}\lambda_1^1(B^k)\intl_{\ell B^k} dt  \intl_{\R^n}|\xi|^{2(s-1)} |\mathcal F_n[{\phi}]|^2d\xi
		\\&
		=\ell^{-2}\lambda_1^1(B^k)\intl_{\ell B^k} dt  
		\intl_{\R^n}|\left(-\Delta_{n}\right)^{\frac{s-1}{2}} {\phi}|^2 dx
		\ge \ell^{-2}\lambda_1^1(B^k)\lambda_1^{s-1}(\omega^n)\,,
	\end{aligned}
	$$
	which concludes the proof of the lower bound on $\lambda_1^s(\omega^n\times\ell B^k)$.
	
	\medskip
	
	Next, notice that Lemma \ref{L:numbers} implies the estimate
	$$
	(|\xi|^2+\ell^{-2}|\tau|^2)^s< |\xi|^{2s}+(s-1)c_{s-1}\ell^{-2s}|\tau|^{2s}+s\ell^{-2}|\xi|^{2(s-1)}|\tau|^2+\frac{s(s-1)}2c_{s-1}\ell^{-4}|\xi|^{2(s-2)}|\tau|^4,
	$$
	which holds for a.e. $\xi\in \R^n,\tau\in\R^k$.
Therefore, arguing as in the proof of Theorem \ref{T:eigenvalue2} we can estimate
\begin{multline*}
	\lambda_1^s(\omega^n\times\ell B^k)<\lambda_1^s(\omega^n)+(s-1)c_{s-1}\ell^{-2s}\lambda_1^s(B^k)\\
	+s\ell^{-2}\irn|\xi|^{2(s-1)}|\cF_n[\f_1]|^2d\xi\intl_{\R^k}|\tau|^2|\cF_k[\psi_1]|^2d\tau\\
	+\frac{s(s-1)}2c_{s-1} \ell^{-4}\irn|\xi|^{2(s-2)}|\cF_n[\f_1]|^2d\xi\intl_{\R^k}|\tau|^4|\cF_k[\psi_1]|^2d\tau.
\end{multline*}
The upper bound on $\lambda_1^s(\omega^n\times\ell B^k)$ follows, thanks to the expression for $\gamma_s$ given by (\ref{eq:gamma}) and \eqref{eq:gamma0},
and since
$$
\irn|\xi|^{2(s-2)}|\cF_n[\f_1]|^2\,d\xi\intl_{\R^k}|\tau|^4|\cF_k[\psi_1]|^2\,d\tau=
\irn|\left(-\Delta_n\right)^{\frac{s-2}{2}}\!\f_1|^2\,dx\intl_{B^k}|\Delta \psi_1|^2 \,dt\,.
$$
The estimate on $\delta_s$ is readily proved via the H\"older inequality. 
\QED

\subsubsection{A result by M.P. Owen \cite{O}.}
\label{SSS:Owen}

From now on we set
$$
I=(-1,1)\,.
$$

Let $s=2$, $n=k=1$ and take $\omega^n=B^k=I$. To avoid ambiguity, we use the notation $\lambda^{\!(2)}_1$ instead of $\lambda^2_1$. We deal with the eigenvalue problem
		\begin{equation}
			\label{eq:Owen}
			\begin{cases}
				\Delta^2 u=\lambda^{\!(2)}_1\!(I\times \ell I)\,u&\text{in $I\times \ell I$\,,}\\
				u=0\,,~~\partial_n u=0&\text{on $\partial (I\times \ell I)$\,.}
			\end{cases}
		\end{equation}
		Numerical computations give $\lambda^{\!(2)}_1\!(I)\approx 31.285$. In case $s=2$, Theorem \ref{T:eigenvalue3} gives
		$$
		\lambda^{\!(2)}_1(I)+2\frac{\pi^4}{16\ell^2}+\frac{\lambda^{\!(2)}_1(I)}{\ell^4}
		<\lambda^{\!(2)}_1\!(I\times \ell I) <
		\lambda^{\!(2)}_1(I)+2\frac{\|\partial_x\f_1\|_2^4}{\ell^2}+2\frac{\lambda^{\!(2)}_1(I)}{\ell^4}\,,
		$$
			since
			${\pi^2}/{4}$ is the principal value for $-\partial^2_{x}$ on $H^1_0(I)$. It holds that
			$$
\frac{\pi^4}{16}<\|\partial_x\f_1\|_2^4<{\lambda^{\!(2)}_1\!(I)}\,.
			$$		
		
		In \cite{O}, M.P. Owen obtained the following asymptotic estimate:
		$$
		\lambda^{\!(2)}_1\!(I\times \ell I) = \lambda^{\!(2)}_1\!(I)+2 D\frac{\sqrt{\lambda^{\!(2)}_1\!(I)}}{\ell^{2}}+O(\ell^{-3})\,,
		$$
		where $D\approx 5.4270$ is implicitly defined via a transcendental equation.

\subsection{Thin domains}

As a consequence of Theorems \ref{T:eigenvalue1} and \ref{T:eigenvalue2}, 
we obtain the following corollary for  domains of the form
$$
\eps I\times A\subset \R\times\R^k\,, \qquad \eps\to 0\,.
$$

\begin{Corollary}
	\label{T:Braides1}
	Let $A\subset\R^k$ be a bounded open set in $\R^k$ and let $I$ be the interval $(-1,1)$.
	\begin{itemize}
		\item[$i)$] If $s\in(0,1)$, then for any integer $m\ge 1$ it holds that   
		$$\lambda^s_1(I)<\eps^{2s}\lambda^s_m(\eps I\times A) < \lambda^s_1(I)+\eps^{2s}{\lambda^s_m(A)}\,,$$
		provided that $\eps>0$ is small enough;
		\item[$ii)$]	 If $s>1$, then
		$$
		\lambda^s_1(I)+\eps^{2s}\lambda^s_1(A)< \eps^{2s}\lambda^s_1(\eps I\times A) 
		< \lambda^s_1(I)+\gamma_s~\! \eps^2+ c_s\eps^{2s}\lambda^s_1(A)
		$$
		for any $\eps>0$, where the constants $c_s, \gamma_s$ are as in Theorem \ref{T:eigenvalue2}.
	\end{itemize}
\end{Corollary}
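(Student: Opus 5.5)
The plan is to reduce Corollary \ref{T:Braides1} to Theorems \ref{T:eigenvalue1} and \ref{T:eigenvalue2} by one dilation and one permutation of coordinates. We take $n=1$, $\omega^1=I$, $B^k=A$ and $\ell=\eps^{-1}$. The hypothesis that $B^k$ is a neighbourhood of the origin plays no role in those proofs: only boundedness and openness of $B^k$ enter, through Lemma \ref{L:tilde} and the rescaling $\psi_1^\ell$. So we may apply them with $B^k=A$ an arbitrary bounded open set.

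\emph{Step 1 (scaling).} Let $D\subset\R^d$ be bounded and open and let $\mu>0$. The unitary map $u\mapsto \mu^{-d/2}u(\cdot/\mu)$ sends $\tHs(D)$ onto $\tHs(\mu D)$. On the Fourier side, the quadratic form $\int|\zeta|^{2s}|\cF_d[u]|^2$ is multiplied by $\mu^{-2s}$. This is the same computation as in the proof of \eqref{eq:Lsell0} and as in $\lambda^s_1(\ell B^k)=\ell^{-2s}\lambda^s_1(B^k)$. By the min--max principle,
$$
\lambda^s_m(\mu D)=\mu^{-2s}\lambda^s_m(D)\qquad\text{for every } m\ge1 .
$$
We apply this with $D=I\times\eps^{-1}A$ and $\mu=\eps$, which gives
$$
\eps^{2s}\lambda^s_m(\eps I\times A)=\lambda^s_m(I\times \eps^{-1}A).
$$

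\emph{Step 2 (substitution).} The set $I\times\eps^{-1}A$ is exactly $\omega^n\times\ell B^k$ with $n=1$, $\omega^1=I$, $B^k=A$ and $\ell=1/\eps$. The ordering of the factors in $\R\times\R^k$ is already the one used in the paper, so no permutation is in fact needed. If one preferred to write the thin factor second, the operator $\Dsnp$ is invariant under orthogonal changes of variables, so the eigenvalues would be unchanged.

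For $s\in(0,1)$, Theorem \ref{T:eigenvalue1} gives, for $\ell$ large (that is, $\eps$ small),
$$
\lambda^s_1(I)<\lambda^s_m(I\times\ell A)<\lambda^s_1(I)+\ell^{-2s}\lambda^s_m(A).
$$
Replacing $\ell^{-2s}$ by $\eps^{2s}$ yields $i)$.

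For $s>1$, Theorem \ref{T:eigenvalue2} gives, for every $\ell>0$,
$$
\lambda^s_1(I)+\eps^{2s}\lambda^s_1(A)<\eps^{2s}\lambda^s_1(\eps I\times A)<\lambda^s_1(I)+sc_s\eps^{2s}\lambda^s_1(A)+\gamma_sc_s\eps^2,
$$
where $\gamma_s$ is computed with $\f_1$ the first eigenfunction of $I$ and $\psi_1$ that of $A$.

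\emph{Main obstacle.} The only real mismatch is in the constants of $ii)$. The stated bound has $\gamma_s\eps^2+c_s\eps^{2s}\lambda^s_1(A)$, whereas Theorem \ref{T:eigenvalue2} literally gives $sc_s$ and $\gamma_sc_s$. So either the corollary is to be read with the constants of Theorem \ref{T:eigenvalue2}, or one must sharpen the elementary inequality \eqref{eq:elementary}. I would first re-examine \eqref{eq:elementary} in the proof of Theorem \ref{T:eigenvalue2}, looking for a version of the form $(1+\zeta)^s\le 1+a\zeta+b\zeta^s$ with the required coefficients.

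This does not look possible for all $\zeta$: as $\zeta\to0$ one needs $a\ge s$, and as $\zeta\to\infty$ one needs $b\ge1$. Hence a bound with the $\eps^2$-coefficient equal to $\gamma_s$ alone cannot be reached this way. I would therefore present $ii)$ with the constants $sc_s$ and $\gamma_sc_s$ inherited from Theorem \ref{T:eigenvalue2}, absorbing the factors into the constants as the phrase ``as in Theorem \ref{T:eigenvalue2}'' suggests, and flag this point explicitly.
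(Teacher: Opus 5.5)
Your reduction is exactly the paper's proof. The paper uses the $\tHs$-isometry $u\mapsto\eps^{-\frac{1+k-2s}{2}}u(\cdot/\eps)$ together with the scaling of the $L^2$ norm. You use the $L^2$-unitary dilation together with the scaling of the quadratic form, which is the same computation. Both then substitute $n=1$, $\omega^n=I$, $B^k=A$, $\ell=1/\eps$ into Theorems \ref{T:eigenvalue1} and \ref{T:eigenvalue2}. Part $i)$ and the lower bound in $ii)$ are fine as you wrote them.

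You are right that the upper bound in $ii)$ does not follow from this substitution; the paper's one-line proof passes over this point. In fact the bound is false as written. Take $s=2$ (so $c_2=1$), $k=1$, $A=I$. For $\phi\in H^2_0$ one has $\int|\Delta\phi|^2=\|\phi_{xx}\|_2^2+2\|\phi_{xt}\|_2^2+\|\phi_{tt}\|_2^2$. One-dimensional Rayleigh quotients in each variable then give (this is the lower bound of Theorem \ref{T:eigenvalue3})
\[
\eps^{4}\lambda^{(2)}_1(\eps I\times I)>\lambda^{(2)}_1(I)\,(1+\eps^4)+\tfrac{\pi^4}{8}\,\eps^2 .
\]
The claimed upper bound is $\lambda^{(2)}_1(I)(1+\eps^4)+\gamma_2\eps^2$ with $\gamma_2=\|\f_1'\|_2^4$. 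For the clamped mode $\f_1\propto\cos(\beta x)/\cos\beta-\cosh(\beta x)/\cosh\beta$, with $\tan\beta+\tanh\beta=0$ and $\beta\approx2.365$, one computes $\|\f_1'\|_2^2\approx3.08$. Hence $\gamma_2\approx9.46<\pi^4/8\approx12.18$, a contradiction for every $\eps>0$. Your remark that $a\ge s$ is forced as $\zeta\to0$ is the right intuition: the $\eps^2$ term comes from the Taylor term $s|\xi|^{2(s-1)}|\tau|^2$, and the factor $s$ is genuinely there.

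The genuine gap is in your fallback. You import $c_s\gamma_s$ from the \emph{statement} of Theorem \ref{T:eigenvalue2}, but its proof gives $sc_s\gamma_s$. With $\zeta=\ell^{-2}|\tau|^2|\xi|^{-2}$, inequality \eqref{eq:elementary} puts the factor $sc_s$ on both the $\ell^{-2s}$ term and the $\ell^{-2}$ term; the displayed theorem drops the $s$ in the second.

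That displayed constant is also too small. In the same example your version of $ii)$ reads $\lambda^{(2)}_1(I)(1+2\eps^4)+\gamma_2\eps^2$. This falls below the lower bound above as soon as $\lambda^{(2)}_1(I)\eps^2<\pi^4/8-\gamma_2$, i.e. for $\eps$ below about $0.29$.

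``Absorbing the factors into the constants'' is not available either, since $c_s$ and $\gamma_s$ are explicitly defined numbers. What the argument actually proves, and what you should state, is
\[
\lambda^s_1(I)+\eps^{2s}\lambda^s_1(A)<\eps^{2s}\lambda^s_1(\eps I\times A)<\lambda^s_1(I)+sc_s\gamma_s\,\eps^2+sc_s\,\eps^{2s}\lambda^s_1(A).
\]
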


\proof
It is not restrictive to assume that $0\in A$.
Notice that the transform
\begin{equation}\label{transform}
	u(z)\mapsto \invbreve u_\eps(z):=\eps^{-\frac{1+k-2s}{2}}u\Big(\frac{z}{\eps}\Big)
\end{equation}
is an isometry $\tHs(\eps I\times A)\to \tHs\big(I\times \tfrac1\eps A\big)$, while
$$
\intl_{-\eps}^\eps dx\intl_A|u|^2\,dt= \eps^{-2s}\intl_{-1}^1 dx\intl_A|\invbreve u_\eps|^2\,dt\,.
$$
Thus $\lambda^s_m(\eps I\times A)=\eps^{-2s}\lambda^s_m(I\times \tfrac1\eps A)$.
The Corollary follows by 
taking $n=1$ and $\omega^n=I$ in Theorems  \ref{T:eigenvalue1} and \ref{T:eigenvalue2}, and by replacing $\ell$ with $1/\varepsilon$.
\QED

\begin{Remark}
	For any $s>1$ the eigenvalue  $\lambda_1^s(I)$ is simple, see \cite{AJS,DG}. Hence the remarks in Subsection \ref{SSS:lambda_m}, combined with the transform \eqref{transform}, imply that  for any fixed integer $m\geq2$ the estimate
	$$
	\lambda^s_1(I)+\eps^{2s}\lambda^s_m(A)<\eps^{2s}\lambda^s_m(\eps I\times A)
	$$
	holds, provided that $\eps>0$ is small enough.
\end{Remark}

\subsubsection{A result by F.L. Bakharev and S.A. Nazarov \cite{BSAN}.}
\label{SSS:BSAN}

Here use the same notation as  in Subsection \ref{SSS:Owen}. 
The eigenvalue problem
	$$
	\begin{cases}
		\Delta^2 u=\lambda^{\!(2)}_m\!(\eps I\times I)\,u&\text{in $\eps I\times I=(-\eps,\eps)\times (-1,1)\,,$}\\
		u=0\,,~~\partial_n u=0&\text{on $\partial (\eps I\times I)$}
	\end{cases}
	$$
	is equivalent to (\ref{eq:Owen}). Therefore, the results in \cite{O} provide the asymptotic behaviour of the principal eigenvalue $\lambda^{\!(2)}_1\!(\eps I\times I)$.
	
	In \cite{BSAN}, F.L. Bakharev and S.A. Nazarov were able to provide asymptotic estimates on $\lambda^{\!(2)}_m(\eps I\times I)$
	for any integer $m\ge 1$. More precisely,
	they proved that there exists a constant $C_m$ not depending on $\eps$, such that
	$$
	2\frac{\pi^2}{4}M\eps^2m^2-C_m\eps^\frac52< \eps^4\lambda^{\!(2)}_m(\eps I\times I)-\lambda^{\!(2)}_1\!(I)
	<2 \frac{\pi^2}{4}M\eps^2m^2+C_m\eps^\frac52
	$$
	for $\eps>0$ small enough, depending on $m$. Here $M$ is an explicit constant depending on the first eigenfunction of the Dirichlet biharmonic problem on the segment.

\paragraph{Acknowledgments.}
G. Romani is partially supported by the INdAM-GNAMPA 2026 project \textit{Structural degeneracy and criticality in (sub)elliptic PDEs} (E53C25002010001).

\end{document}